\def\Z{\mathbb Z}
\def\C{\mathbb C}
\def\R{\mathbb R}
\def\N{\mathbb N}
\def\A{\mathcal A}
\def\B{\mathcal B}
\def\Z{\mathbb Z}
\def\R{\mathbb R}
\def\N{\mathbb N}
\def\A{\mathcal A}
\def\B{\mathcal B}
\newtheorem{thm}{Theorem}
\newtheorem{coro}[thm]{Corollary}
\newtheorem{lem}[thm]{Lemma}
\newtheorem{lmx
m}[thm]{Lemma}
\newtheorem{claim}[thm]{Claim}
\newtheorem{prop}[thm]{Proposition}
\newtheorem{defi}[thm]{Definition}
\crefname{thm}{theorem}{theorems}
\crefname{theorem}{theorem}{theorems}
\crefname{coro}{corollary}{corollaries}
\crefname{example}{example}{examples}
\crefname{lem}{lemma}{lemmas}
\crefname{lmm}{lemma}{lemmas}
\crefname{claim}{claim}{claims}
\crefname{obs}{observation}{observations}
\crefname{proposition}{proposition}{propositions}
\crefname{prop}{proposition}{propositions}
\crefname{defi}{definition}{definitions}
\newtheorem{example}[thm]{Example}
\crefname{example}{example}{examples}
\begin{document}

\title{On positional representation of integer vectors}

\author[Edita Pelantov\'a]{Edita \textsc{Pelantová}}
\author[Tomáš Vávra]{Tomáš \textsc{Vávra}}

\address[Edita Pelantov\'a]{
Department of Mathematics, FNSPE Czech Technical University in Prague\\
Trojanova 13, 120 00 Praha 2, Czech Republic}

\address[Tomáš Vávra]{Department of Pure Mathematics, University of Waterloo, Waterloo, Ontario, Canada N2L 3G1}

\begin{abstract}

We show that any $m\times m$  matrix $M$ with integer entries  and $\det M  =\Delta \neq 0$ can be equipped by a finite digit set  $\mathcal{D}\subset\mathbb{Z}^m$
such that any integer $m$-dimensional vector  belongs to the set
$$  {\rm Fin}_{\mathcal{D}}(M)= \Bigl\{\sum_{k\in I}M^k {d}_k : \emptyset\neq I  \text{ finite subset of } \mathbb{Z} \text{ and } {d}_k \in \mathcal{D} \text{ for each }  k \in I\Bigr\} \subset \bigcup\limits_{k\in \mathbb{N}} \frac{1}{\Delta^k}\mathbb{Z}^{m} \,.
$$
We  also characterize  the matrices $M$  for which  the sets  $ {\rm Fin}_{\mathcal{D}}(M)$   and $ \bigcup\limits_{k\in \mathbb{N}} \frac{1}{\Delta^k}\mathbb{Z}^{m}$  coincide.

\end{abstract}

\maketitle

\noindent \textit{Keywords:} { vector representation, number system, Jordan form  }

\noindent \textit{2000MSC:} 11C20, 11A63, 15B36



\section{Introduction}
 
The idea to represent $m$-dimensional vectors by a single string of digits  can be traced back to the work of  A. Vince \cite{Vince93a}  and \cite{Vince93b} who showed  that for any expansive  matrix  $M \in \mathbb{Z}^{m\times m}$   there exists a digit set $\mathcal{D}\subset \mathbb{Z}^m$ such that any integer vector $x \in \mathbb{Z}^m$ can be written in the form   $x= \sum_{k=0}^nM^k {d}_k$, where $d_k\in \mathcal{D}$. In other words, the whole $\mathbb{Z}^m$ is representable in the matrix numeration system $(M,\mathcal{D})$. 
On the other hand, if a matrix $M$ has an eigenvalue inside the unit circle, no choice of the digit set $\mathcal{D}\subset \mathbb{Z}^m$ allows to represent all integer vectors as a combination of non-negative powers of $M$ only.

 Many works devoted to positional representations of elements of a commutative finitely generated ring can be interpreted as a special case of the matrix numeration systems.   From this point of view, the  history of the matrix numeration systems had started several decades before the year 1993, the year Vince's results have been published.
 Positional number systems used to represent  the Gaussian integers or,  more general,   elements of a ring of integers in a quadratic number field can be viewed as a matrix number system given by  a  $2\times 2$ matrix.  Number systems of this type   were studied by Penney \cite{Penney}, K\'atai and Szab\'o \cite{KataiSzabo75},  K\'atai and B. Kov\'acs,  \cite{KataiKovacs80}, \cite{KataiKovacs81},  Gilbert \cite{Gilbert81}. 
This concept was extended to algebraic fields of higher order by  B. Kov\'acs \cite{KovacsBela81} and   B. Kov\'acs and Peth\H{o} \cite{KovacsPetho83}. A number system with an algebraic base $\beta$ can be interpreted as a matrix number system with the base being the companion matrix of the minimal polynomial of $\beta$. Hence the characteristic polynomial of the matrix is irreducible over $\mathbb{Q}$.  
The concept of the so-called canonical number system for irreducible polynomials   was further  generalized to arbitrary  polynomials from  $\mathbb{Z}[x]$
by Peth\H{o} \cite{Petho91}.
The dynamic properties of canonical number systems can be well studied in the formalism of the shift radix systems introduced in \cite{ABBPT05}. 
A very general setting  of the canonical number systems was recently  considered in  \cite{EvertseSpol19}.

Our aim is to study the matrix numeration systems. The most important property of a square   matrix  can be deduced from its Jordan  form.  
Let us point out that the Jordan form of the companion matrix associated to  a polynomial  has a specific property: only one Jordan block corresponds  to each  eigenvalue. Therefore the study of  matrix numeration systems can display  new phenomena and  is of its own interest.  The  matrix formalism for numeration systems -- under the name {\it numeration systems in lattices}  --  was systematically used  by A. Kov\'acs in  \cite{KovacsAttila03}.  Of course,  A. Kov\'acs, just like Vince, considers integer matrices as they  map  a lattice into itself.   
The Vince's results on integer matrices were recently generalized  by J.  Jankauskas and  J. Thuswaldner to matrices   $M \in   \mathbb{Q}^{m\times m}$ with rational entries and without eigenvalues in modulus strictly smaller than 1, see \cite{JT}.

In this  paper,   we generalize the Vince's result in another direction:  our  representation of an integer  vector  can use both positive and negative powers of a matrix $M \in \mathbb{Z}^{m\times m}$ with  $\det M = \Delta \neq 0$. Our aim is to study the set  $$  {\rm Fin}_{\mathcal{D}}(M)= \Bigl\{\sum_{k\in I}M^k {d}_k : I  \text{ finite non-empty subset of } \mathbb{Z} \text{ and } {d}_k \in \mathcal{D} \text{ for each }  k \in I\Bigr\} \subset \bigcup\limits_{k\in \mathbb{N}} \frac{1}{\Delta^k}\mathbb{Z}^{m} \,.
$$
Notice that because the set of positions $I$ is finite, we can treat the position not in $I$ as being occupied by the zero digit. Therefore we always assume that zero is in the digit set $\mathcal D.$
We show that for any non-singular matrix $M \in \mathbb{Z}^{m\times m}$ there exists  a finite set  $\mathcal{D}\subset \mathbb{Z}^m$ such that
$  \mathbb{Z}^{m} \subset   {\rm Fin}_{\mathcal{D}}(M)$.    
 For such $\mathcal D$, we also characterise  the matrices $M$  for which  the sets  $ {\rm Fin}_{\mathcal{D}}(M)$   and $ \bigcup\limits_{k\in \mathbb{N}} \frac{1}{\Delta^k}\mathbb{Z}^{m}$  coincide.

\section{The main result}
This section contains the main result and its proof that relies heavily on Proposition~\ref{skoroHotovo}. The proof of this proposition is, however, quite technical, therefore we provide it in its own Section \ref{sectionProposition}.

The method of proving the main theorem is distinct from the one used in \cite{JT}. Their setting allowed the problem to be reduced to a problem in canonical number systems. This approach does not work in our case because we allow eigenvalues that are smaller than one in absolute value.

\begin{thm}\label{main}  Let  $M \in \mathbb{Z}^{m\times m}$  be a non-singular matrix. Then there exists a finite digit set $\mathcal{A}\subset \Z^m$ such that any vector $z \in \Z^m$ can be written in the form
$$
z = \sum_{k\in I}  M^kd_k, \ \  \text{ where  }\quad \emptyset\neq I \subset \Z, \ I \text{\ finite\ \ and }\  d_k \in \mathcal{A} \ \text{for each } k \in I.
$$
\end{thm}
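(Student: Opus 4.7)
The plan is to combine a standard division-with-remainder reduction of $M$ against a finite digit set with the reduction supplied by Proposition~\ref{skoroHotovo}. I would start by choosing $\mathcal{A}$ to contain a complete set of coset representatives $\mathcal{R}\subset\Z^m$ of $\Z^m/M\Z^m$, with $0\in\mathcal{R}$. Since $[\Z^m:M\Z^m]=|\Delta|$, this set is finite of cardinality $|\Delta|$. For any $z\in\Z^m$ there is a unique $r\in\mathcal{R}$ with $z\equiv r\pmod{M\Z^m}$; setting $z'=M^{-1}(z-r)\in\Z^m$ and iterating produces, for every $N\ge 1$, a positive-power partial expansion
$$z \;=\; \sum_{k=0}^{N-1} M^k r_k + M^N z_N,\qquad r_k\in\mathcal{R},\ z_N\in\Z^m.$$

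Next, I would apply Proposition~\ref{skoroHotovo} to the residual $z_N$. The proposition is flagged as the main technical engine of the section, so I expect it to provide a representability statement that is already close to the conclusion of the theorem, possibly with a caveat (for instance, requiring the input vector to lie in a prescribed class or producing the digits in a slightly enlarged alphabet). Assuming it yields a finite representation
$$z_N \;=\; \sum_{j\in J} M^j e_j,\qquad J\subset\Z \text{ finite},\ e_j\in\mathcal{A},$$
plugging this into the partial expansion and re-indexing the second sum by $k=j+N$ collects everything into a single finite sum
$$z \;=\; \sum_{k=0}^{N-1} M^k r_k + \sum_{j\in J} M^{j+N} e_j,$$
a representation of $z$ over powers of $M$ indexed by a finite subset of $\Z$ with digits in $\mathcal{A}$. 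If the proposition applies only to $z_N$ in a bounded class, a preliminary step is to show that a suitable $N$ drives $z_N$ into that class; for integer matrices this is typically done by a pigeonhole/periodicity argument exploiting that the greedy remainders remain bounded on the non-expansive part of the spectrum.

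The main obstacle I anticipate at the level of this theorem is the bookkeeping of the digit set: a single finite $\mathcal{A}\subset\Z^m$ must simultaneously contain $\mathcal{R}$, the auxiliary digits demanded by Proposition~\ref{skoroHotovo}, and any finite corrections needed to push $z_N$ into the proposition's domain. The genuinely hard analytical issue, namely handling eigenvalues of $M$ of modulus $\le 1$ (for which the positive-power greedy algorithm need not terminate and one must really employ negative powers of $M$), is precisely what Proposition~\ref{skoroHotovo} is designed to discharge, and so it lives in the proof of the proposition rather than in the proof of the theorem itself.
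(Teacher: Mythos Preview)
Your proposal correctly identifies Proposition~\ref{skoroHotovo} as the engine, but you misread what it delivers, and this is a genuine gap. The proposition is stated for the real Jordan form $J$ of $M$ and a lattice $L=P\Z^m$ close to $\Z^m$ (not for $M$ and $\Z^m$ directly), and for $x$ with $\|\pi_e(x)\|_\infty\le 1$ it outputs
\[
x \;=\; J^{-1}d_1+\cdots+J^{-N}d_N + J^{-N}y,\qquad d_i\in\mathcal{D}\subset L,\quad \|y\|_\infty\le C,
\]
a partial expansion with a bounded \emph{remainder} $y$, not a finite representation. The paper's proof consists precisely of the two ingredients you did not supply: (i) pass to $J$ via $P$, choosing $P$ so that $L=P\Z^m$ is close to $\Z^m$ and $JL\subset L$; (ii) after applying the proposition with $N\ge j$, multiply through by $J^N$ to see $y\in L$, and absorb $y$ into the last digit by taking $\mathcal{A}=\mathcal{D}+\{b\in L:\|b\|_\infty\le C\}$, so that $d_N+y\in\mathcal{A}$. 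Translating back via $P^{-1}$ then lands the digit set in $\Z^m$. Your sentence ``assuming it yields a finite representation'' hides exactly the step where the work lies.

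Your preliminary greedy layer $z=\sum_{k=0}^{N-1}M^kr_k+M^Nz_N$ is superfluous. The paper's reduction to the domain of the proposition is a one-liner: since $J^{-1}$ is contractive on $V_e$, there is $j$ with $\|\pi_e(J^{-j}z)\|_\infty<1$, and one applies the proposition to $x=J^{-j}z$. No coset representatives, no pigeonhole. Your greedy iteration $z_{k+1}=M^{-1}(z_k-r_k)$ would only bound (not drive below $1$) the expansive projection of $z_N$, and your remark that the remainders ``remain bounded on the non-expansive part'' has it backwards: it is the \emph{expansive} projection that the iteration controls, while the contractive and unitary parts of $z_N$ may grow without bound under $M^{-1}$---which is harmless for the proposition's hypothesis but undermines the periodicity heuristic you invoke.
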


The property $\Z^m\subseteq \mathrm{Fin}_{\mathcal D}(M)$ easily implies a stronger property. Because $\mathrm{Fin}_{\mathcal D}(M)$ is closed under multiplication by $M^i$ for $i\in\Z,$ we easily obtain the following corollary.

\begin{coro}
Let $\Z^m\subseteq \mathrm{Fin}_{\mathcal D}(M)$, then we have $\bigcup_{n\in\Z}M^n\Z^m=\mathrm{Fin}_{\mathcal D}(M)$.

In particular, for each $M\in\Z^{m\times m}$ non-singular, there exists a finite digit set $\mathcal D\subset \Z^m$, such that $\mathrm{Fin}_{\mathcal D}(M)=\bigcup_{n\in\Z}M^n\Z^m$.
\end{coro}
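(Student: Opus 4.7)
The plan is to prove the equality by handling the two inclusions separately; only the hypothesis $\Z^m\subseteq \mathrm{Fin}_{\mathcal D}(M)$ together with the stability of $\mathrm{Fin}_{\mathcal D}(M)$ under multiplication by integer powers of $M$ is needed, so the argument is essentially bookkeeping with the index set $I$.

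For the forward inclusion $\mathrm{Fin}_{\mathcal D}(M)\subseteq \bigcup_{n\in\Z}M^n\Z^m$, I would take an arbitrary $x=\sum_{k\in I}M^k d_k$ with $I\subset\Z$ finite and non-empty, set $n=\min I$, and factor out $M^n$ to obtain $x=M^n\sum_{k\in I}M^{k-n}d_k$. Since every exponent $k-n$ is a non-negative integer, $M$ has integer entries, and each $d_k\in\mathcal D\subset\Z^m$, the bracketed sum lies in $\Z^m$, placing $x$ in $M^n\Z^m$.

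For the reverse inclusion, I would pick $x\in M^n\Z^m$, write $x=M^n z$ with $z\in\Z^m$, and invoke the hypothesis $\Z^m\subseteq\mathrm{Fin}_{\mathcal D}(M)$ to expand $z=\sum_{k\in J}M^k d_k$ for some finite non-empty $J\subset\Z$ and digits $d_k\in\mathcal D$. Multiplying through by $M^n$ and re-indexing with $k'=k+n$ yields a representation of $x$ whose index set is $J+n$, again finite and non-empty, so $x\in\mathrm{Fin}_{\mathcal D}(M)$.

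The ``in particular'' statement is then immediate: Theorem~\ref{main} supplies a finite digit set $\mathcal D\subset\Z^m$ satisfying $\Z^m\subseteq\mathrm{Fin}_{\mathcal D}(M)$, and the equality just established upgrades this containment to $\mathrm{Fin}_{\mathcal D}(M)=\bigcup_{n\in\Z}M^n\Z^m$. I do not foresee any real obstacle, since the whole argument rests on the finiteness of index sets and the observation that shifting all positions by a common integer preserves membership in $\mathrm{Fin}_{\mathcal D}(M)$; no property of $M$ beyond non-singularity is used.
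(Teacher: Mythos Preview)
Your proof is correct and follows exactly the approach the paper indicates: the paper does not give a formal proof of this corollary but merely remarks that $\mathrm{Fin}_{\mathcal D}(M)$ is closed under multiplication by $M^i$ for $i\in\Z$, which is precisely the index-shifting observation you make explicit. Your write-up simply fills in the bookkeeping the paper leaves to the reader.
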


The full proof of Theorem~\ref{main} is rather technical. In order to simplify readability, we put the technical part into Proposition~\ref{skoroHotovo} whose proof will be the content of Section~\ref{sectionProposition}. Nevertheless, we can now present the main idea of our work.

For $\mathcal D\subset\Z^n$ finite, define $T_d(x) = Mx-d$ and $x_k= T_d(x_{k-1})$ with $x_0 = x.$ We directly obtain
\begin{equation}\label{finite}
    x = M^{-1}d_1 + M^{-2}d_2 + \dots + M^{-k}d_k + M^{-k}x_k.
\end{equation}
Assume that $S\subseteq\Z^m$ consists of vectors with the property that there exist $d_1,d_2,\dots$ such that $\|x_i\|<C$ eventually (with the constant being universal for the whole $S$). Then the digit set $\mathcal A = \mathcal D+\{d\in\Z^n : \|d\|<C\}$ suffices for \eqref{finite} to be a finite representation of members of $S$ over $\A$.
It then suffices to show that for any $x\in\Z^m$, $M^{-n}x\in S$ for some $n\in\N$. Indeed, we would have $M^{-n}x = \sum_{i=1}^k M^{-i}d_i,$ i.e. $x = \sum_{i=1}^k M^{n-i}d_i.$

In the following, we will show that an appropriate choice of $S$ is all the elements of $\Z^m$ whose projection into the expansive eigenspace is bounded by a constant depending on $M$ only. Such a choice ensures that $M^{-n}x\in S$ is achievable for all $x\in\Z^m$. It then remains to be shown that with a suitable digit set, 
a sequence of iterations $T_{d_n}T_{d_{n-1}}\cdots T_{d_1} (x)$  will have a bounded norm eventually for any $x\in S$. The main obstacle are unimodular eigenvalues of $M$, in particular those with different algebraic and geometric multiplicity.

Our  proof uses the real  Jordan form $J$ of the matrix  $M $.
If  $M= P^{-1}JP$,  

\begin{equation}\label{prevod}{\text{ the equality  \   $z=  \sum_{k\in I}  M^kd_k $ \ \    gives  \ \     $ Pz =\sum_{k\in I} 
J^{k}Pd_k $}}.\end{equation}      

Instead of looking for a finite digit set from the lattice $\Z^m$ which is  suitable for the matrix $M$ and representation of integer vectors,  we look for   a digit set  from the lattice $P\Z^m$, suitable for $J$ and we represent lattice points of   $P\Z^m$.  The matrix $P$ transforming $M$ to  its Jordan form  is not determined uniquely. We choose it carefully to approximate in some sense   the lattice $\Z^m$.    For this purpose,  we introduce the perturbation set 
$$\mathcal{E} = \{ \varepsilon  \in \R^m :  \|\varepsilon \|_\infty <\tfrac13\}.$$
In the article we work with two norms of $\R^m$: 

$\| .\|_\infty$  denotes the  norm   defined by $\|(x_1, x_2,  \ldots, x_m)\|_\infty = \max_{i}|x_i|$ . 

 $\| .\|_2$  denotes the Euclidean norm, i.e. $\|(x_1, x_2,  \ldots, x_m)\|_2 = \sqrt{\sum_{j=1}^m|x_j|^2}.$

\begin{defi}\label{close} We say that a lattice $L \subset \R^m$ is close to the lattice $\Z^m$, if   \ $\Z^m \subset  L +\mathcal{E}$. \end{defi}

For every $M\in\R^{m\times m}$, there exists a non-singular $P\in\R^{m\times m}$ such that $PMP^{-1}M= J ={\bigoplus_{k}J_k}$ where $J_k$ is a real Jordan block.   Let us recall that  the real Jordan block to $\lambda \in \C$  is a matrix in the form 
$$
\left(\begin{smallmatrix}
R&I&& \\[-7pt]
&R&\ddots& \\\vspace*{-1mm}
&&\ddots&I \\[2mm]
&&&R
\end{smallmatrix}\right)\quad
\qquad \text{with} \ 
R=\begin{cases}
             \ (\lambda) & \mbox{ if } \lambda\in\R, \\[2mm]
             \left(\begin{smallmatrix}
               a & b \\
               -b & a
             \end{smallmatrix}\right) &\mbox{ if }  \lambda=a+ib \in\mathbb{C}  \setminus \mathbb{R}
           \end{cases}
$$
Note that $I$ is a unit matrix of order 1 or 2, according to the order of $R$.

The  vector space $\R^m $ can be decomposed  into  invariant subspaces $\R^m = V_e\oplus V_u \oplus V_c$ of the matrix  $J$  such that  $J$ restricted to $V_e$ is expansive,
 $J$ restricted to $V_c$ is contractive and $J$ restricted to $V_u$ is orthogonal.  In other words,  all eigenvalues of  $J$ restricted to $V_e$ are in modulus $>1$,  all eigenvalues of  $J$ restricted to $V_u$ are in modulus $=1$,  and  all eigenvalues of  $J$ restricted to $V_c$ are in modulus $<1$.  
The subspaces $V_e, V_u$ and $V_c$ may be trivial.   Note that  $\dim V_e  + \dim V_u > 0$ as $M$ is an integer non-singular matrix.   
 Any $x \in \R^m$   can be uniquely written as $x = x_e+x_u+x_c$ with $x_e \in V_e$,  $x_u \in V_u$ and $x_c\in V_c$.   We will denote $x_e =\pi_e(x)$, $x_u =\pi_u(x)$,  and $x_c =\pi_c(x)$,

\bigskip

The proof of the main theorem  will be a consequence of the following proposition.

\begin{prop}\label{skoroHotovo} Let $J\in \R^{m\times m}$ be a non-singular  matrix in the real Jordan form     and  $\pi_e: \R^m\to V_e$  be the projection  into the expansive  invariant subspace $V_e$ of  $J$. Let  $L \subset \R^m$ be a lattice close to  $\Z^m$. Then   there exists  a finite set   $\mathcal{D} \subset L$  and a constant $C$  such that

\medskip
\noindent  for any
 $x \in \R^m$  with   $\|\pi_e(x)\|_\infty \leq 1$   and for any sufficiently large $N\in \N$ we  can write  
\begin{equation}\label{Nzmensi}  x = J^{-1}d_1 + J^{-2}d_2  + \cdots + J^{-N}d_N + J^{-N}y,\  \text{ \ for some }      d_1,d_2, \ldots, d_N \in \mathcal{D}\,    \end{equation}
$$ \text{and}    \ \ y \in \R^m, \ \ \text{with} \ \  \|y\|_\infty \leq C\,.$$
\end{prop}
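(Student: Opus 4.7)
The plan is to recast \eqref{Nzmensi} as an iteration problem. Setting $T_d(x)=Jx-d$ and $x_k=T_{d_k}(x_{k-1})$ with $x_0=x$, a direct unrolling gives $x = \sum_{k=1}^{N}J^{-k}d_k + J^{-N}x_N$, so the proposition reduces to constructing a finite $\mathcal{D}\subset L$ such that, for any $x$ with $\|\pi_e(x)\|_\infty\le 1$, some sequence $d_1,d_2,\ldots\in\mathcal{D}$ drives $\|x_N\|_\infty$ below a universal constant $C$ for all sufficiently large $N$. I will analyse the iteration subspace by subspace, using the fact that $\pi_*(x_k)=J\pi_*(x_{k-1})-\pi_*(d_k)$ for $*\in\{e,u,c\}$.

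\medskip

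The subspace $V_c$ is essentially automatic: $J|_{V_c}$ is contractive, so any bounded digit set pushes $\pi_c(x_k)$ into a compact absorbing set after finitely many steps. The subspace $V_e$ is what dictates the hypothesis $\|\pi_e(x_0)\|_\infty\le 1$: I aim to keep $\pi_e(x_k)$ inside the unit $\ell_\infty$-ball forever, which is achievable provided $\pi_e(\mathcal{D})$ forms a sufficiently fine finite covering of the bounded region $J\cdot\{w\in V_e : \|w\|_\infty\le 1\}$, so that at each step one can pick $d_k$ with $\|J\pi_e(x_{k-1})-\pi_e(d_k)\|_\infty\le 1$. The closeness of $L$ to $\Z^m$, i.e.\ $\Z^m\subset L+\mathcal{E}$, produces such a finite covering inside $L$ by approximating a sufficiently fine grid up to an error of $\tfrac13$ in each coordinate.

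\medskip

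The main obstacle is $V_u$. When $J|_{V_u}$ is diagonalisable over $\C$ the action is by rotations and any bounded $\pi_u(\mathcal{D})$ yields bounded orbits, but on non-semisimple unimodular Jordan blocks $J^k$ grows polynomially and a digit chosen solely for the $V_e$-constraint would let $\pi_u(x_k)$ drift. My plan is to enlarge the basic $\pi_e$-covering set by lattice vectors $\ell\in L$ whose projection $\pi_e(\ell)$ is small, giving freedom to correct $\pi_u(d_k)$ without destroying the $\pi_e$-control; such $\ell$ exist in abundance because $L$ has full rank and is close to $\Z^m$. Within each Jordan block of size $s$ I then correct from the bottom up: the bottom coordinate transforms by a rotation and can be absorbed by the choice of the corresponding coordinate of $\pi_u(d_k)$, and once its orbit is bounded the next coordinate receives only a bounded perturbation from it and can likewise be absorbed, and so on. The delicate point, which is where I expect Section~\ref{sectionProposition} to do most of the work, is quantifying how much $\pi_u$-freedom survives after the $\pi_e$-choice at each step and verifying that it always suffices to counteract the polynomial build-up, yielding a universal bound $C$ on $\|x_N\|_\infty$ independent of the starting vector $x$.
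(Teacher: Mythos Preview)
Your overall strategy matches the paper's: reduce \eqref{Nzmensi} to an iteration $x_{k}=Jx_{k-1}-d_k$ and analyse each invariant subspace separately, with the contractive part automatic, the expansive part kept in the unit $\ell_\infty$-ball by a covering digit set, and the unimodular part handled by a bottom-up correction within each Jordan block. Your bottom-up scheme is precisely the mechanism behind the paper's index function ${\rm ind}$ (Lemma~\ref{unit} and Corollary~\ref{unitCoro}): the last coordinate pair of a block is a pure rotation and is tamed first, after which the preceding pair sees only a bounded perturbation, and so on.

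Where you diverge from the paper is in how you organise the interaction between $L$ and the block decomposition. You propose to work directly in $L$: first select a lattice digit for the $V_e$-constraint, then add further lattice vectors with small $\pi_e$-projection to gain freedom in $V_u$. This creates the coupling you yourself flag as ``the delicate point'': each correction for $V_u$ perturbs $V_e$ (and $V_c$), and you have to track that these perturbations never accumulate to destroy the $\|\pi_e(x_k)\|_\infty\le 1$ invariant. The paper sidesteps this entirely by a cleaner device: it first builds an integer digit set $\widetilde{\mathcal{D}}\subset\Z^m$ as a direct product over the Jordan blocks (equation~\eqref{abeceda}), so the digit choice in each block is made completely independently; only afterwards is each $\tilde d\in\widetilde{\mathcal{D}}$ replaced by a nearby $d\in L$ using $\Z^m\subset L+\mathcal{E}$. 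The discrepancy $\varepsilon=\tilde d-d\in\mathcal{E}$ is then treated as a single uniform perturbation, and the block-wise lemmas (Lemma~\ref{unit}, Lemma~\ref{contract}, Lemma~\ref{expand}) are all proved with an arbitrary $\varepsilon\in\mathcal{E}$ baked in from the start---crucially, the digit $\tilde d$ is chosen independently of $\varepsilon$. This integer-first-then-approximate trick is what dissolves your ``how much $\pi_u$-freedom survives'' worry: in the paper's setup there is no competition between subspaces for the digit choice at all. Your route could be made to work, but the bookkeeping is noticeably heavier, and the paper's decoupling is the idea you are missing.
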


\begin{proof}[Proof of Theorem \ref{main}]
We  use the statement and notation of Proposition  \ref{skoroHotovo}.     Let  $M =P^{-1}JP$, where $J$ is  the real Jordan canonical form of $M$. The  matrix $P$ is not given uniquely. For example,  any matrix $\alpha P$ with a non-zero $\alpha$ transforms $M$ to its Jordan form, as well. Therefore we can  assume without loss of generality that   the lattice $L=P\Z^m$ is close to the lattice $\Z^m$ in the sense of Definition \ref{close}.   As $P^{-1}JP=M$, we get $JP\Z^m =  PM\Z^m \subset P\Z^m$, i.e., $J$ maps the lattice  $L$ into $L$. 
For such   $J$ and $L$, we find by  Proposition    \ref{skoroHotovo}, the digit set $\mathcal{D} \subset L$.   

   Due to  \eqref{prevod},   we have to show that  there exists a finite  digit set $\mathcal{A}\subset L$  such that  
\begin{equation}\label{jinak} \text{ $z \in  L\qquad \Longrightarrow   \quad    \ \ 
z= \sum_{k\in I} 
J^{k}d_k $,  \quad where  $I\subset \Z$ is finite and  $d_k \in \mathcal{A}$  \ for each $k\in I$.}\end{equation}
 \noindent  Define  $$\mathcal{A} := \mathcal{D} + \mathcal{B}, \quad \text{  where }\  
 \mathcal{B} := \{ d \in L :   \|d \|_\infty  \leq C\}.$$ 

 Obviously,   $\mathcal{B}$ is finite   and  $ 0 \in \mathcal{B}$. Therefore $\mathcal{A}$ is  finite  and  $\mathcal{D}\subset \mathcal{A} \subset L$.    Let us show that our  choice  of 
the digit set  $\mathcal{A}$  has the property stated in \eqref{jinak}.

Let    $z \in  L $.     Since $V_e$ is a contractive subspace of the matrix $J^{-1}$, there exists  $j \in \N$ such that   $\|\pi_e(J^{-j}z)\|_\infty < 1$. Applying  \eqref{Nzmensi} to $x =J^{-j}z$ with  $N\geq j$,    we find $d_1, \ldots, d_n \in \mathcal{D}\subset \mathcal{A}$ and $y$ with  $\|y\|_\infty \leq C$  such that 
\begin{equation}\label{cele}J^{-j}z =   J^{-1}d_1 + J^{-2}d_2  + \cdots + J^{-N}d_n + J^{-N}y. \end{equation}

Multiplying \eqref{cele} by $J^N$,  we deduce  $y +d_N = J^{N-j}z - \sum_{k=1}^NJ^{N-k}d_k $.
 As  $J$ maps the lattice  $L$ into $L$,  we have  $y\in L$ and $y \in   \mathcal{B}$.   Obviously,  $y+d_n \in \mathcal{A}$.      Altogether,  $z = \sum_{k=1}^{j-1}J_{-k}d_k + M^{j-N}(d_N+y)$,   and all the coefficients $d_1d_2, \ldots, d_{N-1}$ and $d_{N}+y$ belong to $\mathcal{A}$. 

\end{proof}

\section{Proof of Proposition \ref{skoroHotovo}}\label{sectionProposition}
The proof of Proposition \ref{skoroHotovo} will be done in following way. To the matrix $J$ in the real Jordan form, we  find a  finite digit set $\mathcal D\subset L$, such that the iterations of the transformations $Jx-d$ for some $d\in\mathcal D$, have small norm eventually.

First  we find a digit set  in $\Z^m$ and 
then we  replace  each integer digit by a close element   from  the lattice $L$.   
When working with  the matrix $J$, the  integer lattice has an important advantage. It can be decomposed  into the direct sum  $\Z^m = \oplus_{j} \Z^{m_j}$   where each  lattice   $ \Z^{m_j}$ is contained in an invariant subspace of a real  Jordan block.  Therefore, one can treat each Jordan block separately.

\subsection{A real Jordan block to an eigenvalue  $\lambda$ on the unit circle.}

Let us assume first that  $\lambda$ is not real. The real Jordan block to such a $\lambda$ has an even size, say $2\ell$,   and the form 
\begin{equation}\label{rotace}
J=\left(\begin{smallmatrix}
R&I&& \\[-7pt]
&R&\ddots& \\\vspace*{-1mm}
&&\ddots&I \\[2mm]
&&&R
\end{smallmatrix}\right) \in \mathbb{R}^{2\ell \times 2\ell}\quad
\qquad \text{with } \ 
R= \left(\begin{smallmatrix}
               \cos \varphi& - \sin\varphi  \\
              \sin\varphi& \cos \varphi
             \end{smallmatrix}\right)   \ 
\ \  \text{and } \ 
I= \left(\begin{smallmatrix}
               1& 0  \\
              0& 1
             \end{smallmatrix}\right).
\end{equation}

\begin{defi}\label{defIndex} Let  $K_1, K_2, \ldots, K_\ell$ be positive constants and $J$ the matrix given in \eqref{rotace}.  We define the index 
${\rm ind}: \mathbb{R}^{2\ell} \mapsto \{0,1,\cdots, \ell\}$  in the following way.  For  $x = (x_1,x_2, \ldots, x_{2\ell - 1}, x_{2\ell} )^T $, we put   
\begin{itemize}
\item 
  ${\rm ind}(x) = 0$,\  if \   $  \|(x_{2i-1}, x_{2i})\|_2 < K_i$   for all $i = 1,2, \ldots, \ell$;  

\item  
 otherwise,    \  ${\rm ind}(x) = $ the maximal index $j$   such that $  \|(x_{2j-1}, x_{2j})\|_2 \geq  K_j$. 

\end{itemize}  
 \end{defi}

First we  show two simple claims. 

\begin{claim}\label{claim1} Given $q \in \N,$  denote $\mathcal{B} = \{ (0, 0)^T, (3q, 0)^T,   (0,3q)^T, (-3q, 0)^T,   (0,-3q)^T \}\subset \R^2$.   Then for each $z \in \R^2$ there exists $b \in \mathcal{B}$ such that  
$$
\|z - b\|_2 \leq 6q \qquad \text{or} \qquad \|z - b\|_2 \leq \|z \|_2 - q. \
$$
\end{claim}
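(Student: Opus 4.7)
The plan is to split on the size of $\|z\|_2$ relative to $q$. If $\|z\|_2 \leq 6q$, then taking $b = (0,0)^T \in \mathcal{B}$ immediately gives $\|z-b\|_2 = \|z\|_2 \leq 6q$, so the first inequality in the claim holds.

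The substantive case is $\|z\|_2 > 6q$, where I aim to establish the second inequality. The idea is to pick the non-zero element of $\mathcal{B}$ best aligned with $z$: writing $z = (z_1, z_2)^T$, take $b = (3q\,\mathrm{sgn}(z_1), 0)^T$ when $|z_1| \geq |z_2|$, and $b = (0, 3q\,\mathrm{sgn}(z_2))^T$ otherwise. Then $\langle z, b\rangle = 3q\max(|z_1|, |z_2|) \geq 3q\|z\|_2/\sqrt{2}$, since $\max(|z_1|,|z_2|) \geq \|z\|_2/\sqrt{2}$.

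From here the bound $\|z-b\|_2 \leq \|z\|_2 - q$ follows by a direct squaring argument. Expansion yields $\|z-b\|_2^2 \leq \|z\|_2^2 - 3\sqrt{2}\,q\,\|z\|_2 + 9q^2$, and comparing with $(\|z\|_2 - q)^2 = \|z\|_2^2 - 2q\|z\|_2 + q^2$ reduces the claim to the linear inequality $\|z\|_2 \geq 8q/(3\sqrt{2} - 2)$. Since $8/(3\sqrt{2}-2) < 4$, the hypothesis $\|z\|_2 > 6q$ verifies this with room to spare.

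There is no serious obstacle: the content is just the geometric observation that every point of $\R^2$ lies within $45^{\circ}$ of one of the four positive or negative coordinate axes, so translating $z$ by the corresponding axis point in $\mathcal{B}$ contracts its norm by at least $q$ once $z$ is far enough from the origin. The threshold $6q$ is chosen generously so the squaring estimate passes without any delicate edge-case analysis.
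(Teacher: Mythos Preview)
Your proof is correct and follows essentially the same strategy as the paper's: split according to the size of $z$, use $b=0$ when $z$ is small, and otherwise subtract the axis-aligned element of $\mathcal{B}$ closest to the direction of $z$, then verify the norm drop by squaring. The only cosmetic differences are that the paper first reduces by the $D_4$-symmetry of $\mathcal{B}$ to the octant $z_1\ge z_2\ge 0$ and splits on the condition $z_1\ge 3q$ rather than on $\|z\|_2>6q$; your inner-product formulation avoids the symmetry reduction and is arguably a bit cleaner.
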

\begin{proof}  Because of the  symmetry of the digit set  $\mathcal{B}$,  it is enough to consider $z=(z_1,z_2)^T \in \R^2$ with $z_1\geq z_2\geq 1$. 

\medskip

If $z_1\geq 3q$, put $b =(3q, 0)^T$. Then $\|z - b\|_2 = \sqrt{(z_1 - 3q)^2 + z_2^2}$. The inequality  $ \|z - b\|_2 \leq \|z \|_2 - q$  we want to show is equivalent to  $4q + \sqrt{z_1^2+z_2^2} < 3z_1$. The last inequality can be easily checked, since $4q < (3-\sqrt{2})z_1$  and  $\sqrt{z_1^2+z_2^2} \leq  \sqrt{2}z_1$.   
\medskip

If $z_1<3q$, put $b =(0,  0)^T$. Then  $\|z - b\|_2 = \sqrt{z_1^2 + z_2^2}\leq \sqrt{(3q)^2 + (3q)^2} \leq 6q$. 
\end{proof}
\begin{claim}\label{claim2} Let a constant $c_1\geq 0$ and $ R= \left(\begin{smallmatrix}
               \cos \varphi& - \sin\varphi  \\
              \sin\varphi& \cos \varphi
             \end{smallmatrix}\right)   \in \R^{2\times 2}$ be given.
             Then there exist  a constant $c_2>0$ and $\mathcal{B}  \subset \Z^2$ with $\# \mathcal{B} = 5$ such that  for any $u, v \in \R^2$, $ \|v\|_2 \leq c_1,$ there exists $b \in \mathcal{B}$ such that  
$$\|Ru + v - b\|_2 < c_2 -\tfrac12 \qquad \text{or}\qquad \|Ru + v - b\|_2 < \|u\|_2 - 1.$$
\end{claim}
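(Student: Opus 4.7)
The plan is to derive Claim 2 directly from Claim 1, exploiting that the rotation matrix $R$ preserves the Euclidean norm. I would apply Claim 1 to the vector $z := Ru + v$ for a carefully chosen integer $q$, and then convert the resulting bound from $\|z\|_2$ to $\|u\|_2$ using $\|Ru\|_2 = \|u\|_2$ and $\|v\|_2 \leq c_1$.

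Concretely, I would fix a positive integer $q$ with $q > c_1 + 1$ (for instance $q := \lceil c_1 \rceil + 2$), and set $c_2 := 6q + 1$. Take $\mathcal{B}$ to be precisely the digit set produced by Claim 1, namely $\{(0,0)^T, (\pm 3q, 0)^T, (0, \pm 3q)^T\}$; this consists of five integer points, so $\mathcal{B} \subset \Z^2$ and $\#\mathcal{B} = 5$.

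For arbitrary $u, v \in \R^2$ with $\|v\|_2 \leq c_1$, apply Claim 1 to $z := Ru + v$. This yields some $b \in \mathcal{B}$ for which either $\|Ru + v - b\|_2 \leq 6q$ or $\|Ru + v - b\|_2 \leq \|Ru + v\|_2 - q$. In the first case, the choice $c_2 = 6q + 1$ immediately gives $\|Ru + v - b\|_2 \leq 6q < c_2 - \tfrac12$. In the second case, the orthogonality of $R$ and the triangle inequality yield $\|Ru + v\|_2 \leq \|u\|_2 + \|v\|_2 \leq \|u\|_2 + c_1$, hence
$$\|Ru + v - b\|_2 \leq \|u\|_2 + c_1 - q < \|u\|_2 - 1,$$
where the final strict inequality uses $q > c_1 + 1$.

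I expect no substantive obstacle; the argument is essentially bookkeeping of constants. The one conceptual point worth flagging is that, in passing from the $\|z\|_2$-bound of Claim 1 to the required $\|u\|_2$-bound, one loses an additive error of size at most $c_1$, which must be absorbed by taking $q$ (and correspondingly $c_2$) sufficiently large. Since $c_2$ is allowed to depend on $c_1$ and $R$, this enlargement is harmless.
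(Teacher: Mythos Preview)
Your proof is correct and follows essentially the same approach as the paper's: apply Claim~1 to $z=Ru+v$, then use $\|Ru\|_2=\|u\|_2$ and the triangle inequality to absorb the $\|v\|_2\le c_1$ term by taking $q$ large enough. The only differences are cosmetic choices of constants (you take $q>c_1+1$ and $c_2=6q+1$, whereas the paper takes $q\ge c_1+1$ and $c_2=6q+\tfrac12$); in fact your choices handle the strict inequalities slightly more cleanly.
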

\begin{proof} Find  $q \in \N, q\geq c_1+1$.  For this  $q$,  Claim \ref{claim1} gives us  $\mathcal{B}$. Denote $c_2 = 6q+\tfrac12$. 
Any rotation $R$ preserves the euclidean norm, i.e.,  $\|Ru\|_2  = \|u \|_2$.   According to Claim \ref{claim1}, for each vector $z =Ru + v$ we find $b \in \mathcal{B}$ such that  
$$\|Ru + v - b\|_2 < 6q = c_2-\tfrac12  \quad \text{or}\quad \|Ru + v - b\|_2 < \|Ru + v \|_2 - q \leq  \|u \|_2 + \|v \|_2 - q \leq   \|u \|_2 + \underbrace {c_1 - q}_{\leq -1}\,.$$

\end{proof}

\begin{lem}\label{unit}  Let $J$ be the matrix given in \eqref{rotace}. There exist a finite digit set $\mathcal{D} \subset \Z^{2\ell}$ and constants $K_1,\dots,K_\ell$, such that the function ${\rm ind}$ defined by $K_1,\dots,K_\ell$ has the following property. 

For each $x \in  \R^{2\ell}$ there exists a digit  $ d \in \mathcal{D}$  such that for all  $\varepsilon  \in  \mathcal{E}  $,   the vector $y= Jx-d+ \varepsilon$  satisfies
\medskip

\begin{itemize}

\item $ {\rm ind}(y) \leq  {\rm ind}(x) $;
\medskip

\item   If    \ $ {\rm ind}(y) = {\rm ind}(x) = j $, then $  \|(y_{2j-1},y_{2j})\|_2 \leq   \|(x_{2j-1}, x_{2j})\|_2 -\tfrac12$.
 
\end{itemize}
\end{lem}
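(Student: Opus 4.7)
The plan is to reduce the problem to Claim~\ref{claim2} by exploiting the upper-triangular block structure of $J$ and treating each of the $\ell$ planar blocks separately. Writing $\xi_i = (x_{2i-1}, x_{2i})^T \in \R^2$ (and analogously for the block components $\eta_i, b_i, \varepsilon_i$ of $y, d, \varepsilon$), the equation $y = Jx - d + \varepsilon$ decomposes as
$$\eta_i = R\xi_i + \xi_{i+1} - b_i + \varepsilon_i\quad (i<\ell), \qquad \eta_\ell = R\xi_\ell - b_\ell + \varepsilon_\ell.$$
The digit set will be a Cartesian product $\mathcal D = \mathcal D_1 \times \cdots \times \mathcal D_\ell$ of planar digit sets obtained by iterated use of Claim~\ref{claim2}.

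I would construct the thresholds $K_\ell, K_{\ell-1}, \ldots, K_1$ and the sets $\mathcal D_\ell, \ldots, \mathcal D_1 \subset \Z^2$ inductively from the bottom block upward. For block $\ell$, apply Claim~\ref{claim2} with $c_1 = 0$ to obtain $\mathcal D_\ell$ and a constant $c_2^{(\ell)}$, and set $K_\ell = c_2^{(\ell)}$. For each successive $i < \ell$, with $K_{i+1}$ already defined, apply Claim~\ref{claim2} with $c_1 = K_{i+1}$ to obtain $\mathcal D_i$ and $c_2^{(i)}$, and set $K_i = c_2^{(i)}$. Each $\mathcal D_i$ contains the zero vector, as can be read off from the construction in Claim~\ref{claim1}.

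Given $x$ with $j = \mathrm{ind}(x)$, the digits are chosen as follows. For each $i \geq j$, the definition of $\mathrm{ind}(x)$ yields $\|\xi_{i+1}\|_2 < K_{i+1}$ (with the convention $\xi_{\ell+1} = 0$), so Claim~\ref{claim2} applied to $u = \xi_i$, $v = \xi_{i+1}$ returns a digit $b_i \in \mathcal D_i$ satisfying either $\|R\xi_i + \xi_{i+1} - b_i\|_2 < K_i - \tfrac12$ (alternative A) or $\|R\xi_i + \xi_{i+1} - b_i\|_2 < \|\xi_i\|_2 - 1$ (alternative B). For the remaining indices $i < j$ I would simply set $b_i = 0$. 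The essential feature of this choice is that $d = (b_1, \ldots, b_\ell)$ depends on $x$ only, not on $\varepsilon$.

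It then remains to add the perturbation $\varepsilon_i$ and verify the two conclusions. Using $\|\varepsilon_i\|_2 < \sqrt{2}/3 < \tfrac12$, the triangle inequality turns alternative~A into $\|\eta_i\|_2 < K_i$ and alternative~B into $\|\eta_i\|_2 < \|\xi_i\|_2 - \tfrac12$. For $i > j$ both alternatives give $\|\eta_i\|_2 < K_i$, yielding $\mathrm{ind}(y) \leq j$. For $i = j$, alternative~A forces $\mathrm{ind}(y) < j$, whereas alternative~B produces the required $\|\eta_j\|_2 < \|\xi_j\|_2 - \tfrac12$ whenever $\mathrm{ind}(y)$ is still equal to $j$. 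The main technical hurdle is reconciling the $\varepsilon$-independence of $d$ with the absorption of $\varepsilon_i$ into the final bounds: this forces Claim~\ref{claim2} to be applied with $v = \xi_{i+1}$ rather than $v = \xi_{i+1} + \varepsilon_i$, and the slack $\tfrac12 - \sqrt{2}/3 > 0$ left in Claim~\ref{claim2} is exactly what is needed for the argument to go through.
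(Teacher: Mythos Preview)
Your proposal is correct and follows essentially the same approach as the paper: the thresholds $K_\ell,\dots,K_1$ and the planar digit sets are built top-down by iterated application of Claim~\ref{claim2}, the digit $d$ is assembled blockwise using Claim~\ref{claim2} on the blocks with index $\ge \mathrm{ind}(x)$ (with an arbitrary choice on the lower blocks), and the perturbation is absorbed via $\|\varepsilon_i\|_2<\tfrac12$. The only cosmetic differences are your explicit choice $b_i=0$ for $i<j$ (the paper just says ``choose randomly'') and your sharper bound $\|\varepsilon_i\|_2<\sqrt{2}/3$ in place of the paper's $\le\tfrac12$.
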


\medskip

\begin{proof}

Consider the matrix   $J \in \R^{2\ell \times 2\ell}$  given in \eqref{rotace}. Using $\ell$  times Claim \ref{claim2} , we will construct $\ell$ constants $K_1, \ldots, K_\ell$ to determine  the function ${\rm ind}$:   

by $K_\ell$  and $\mathcal{B}_\ell$   we denote the constant $c_2$ and the digit set found by Claim \ref{claim2} for $c_1 = 0$;   

by $K_{\ell-1}$  and $\mathcal{B}_{\ell-1}$   we denote the constant $c_2$ and the digit set found by Claim \ref{claim2}  for $c_1 = K_\ell$;  

by $K_{\ell-2}$  and $\mathcal{B}_{\ell-2}$   we denote the constant $c_2$ and the digit set found by Claim \ref{claim2}  for $c_1 = K_{\ell-1}$;

\noindent etc. 

\medskip

The  digit set $\mathcal{D}$  is defined by
\begin{equation}\label{abecedaZB}
d\in {\mathcal{D}} \qquad \text{ if and only if } \qquad 
d=\left(\begin{smallmatrix}
b^{(1)}\\\vspace*{-2mm}
b^{(2)} \\\vspace*{-1mm}
\vdots\\[2mm]
b^{(\ell)} 
\end{smallmatrix}\right) \in \mathbb{Z}^{2\ell}, \ \ 
\text{where} \ \ b^{(k)}  \in \mathcal{B}_k, \text{ for }  k=1, 2, \ldots, \ell.
\end{equation}

We show that the function ${\rm ind}$ defined by the constants    $K_1, \ldots, K_\ell$  and the digit set $\mathcal{D}$ have the property declared in the statement of the lemma.

Let $x \in \R^{2\ell}$. 
Then $$\left(\!\!\!\begin{array}{l}
(Jx)_{2i-1}\\
(Jx)_{2i}
\end{array}\!\!\!\right) = R \left(\!\!\!\begin{array}{l}
x_{2i-1}\\
x_{2i}
\end{array}\!\!\!\right) + \left(\!\!\!\begin{array}{l}
x_{2i+1}\\
x_{2i+2}
\end{array}\!\!\!\right), \ \ \ \text{ for $i = 1,2, \ldots, \ell$, where we put}\ \    \left(\!\!\!\begin{array}{l}
x_{2\ell+1}\\
x_{2\ell+2}
\end{array}\!\!\!\right)  =  \left(\!\!\!\begin{array}{l}
0\\
0
\end{array}\!\!\!\right).$$
To this  $x$, we define $d \in \mathcal{D}$ by determining  all its components $b^{(i)}$. 

\begin{itemize}
\item 
If  $i <   {\rm ind}(x)$,  then we choose $b^{(i)} \in \mathcal{B}_i$ randomly.

\item 

If $i\geq   {\rm ind}(x)$, then $ \|(x_{2i+1}, x_{2i+2})\|_2 < K_{i+1}$ and  $b^{(i)} \in \mathcal{B}_i$ is determined by Claim \ref{claim2}   for $ u= (x_{2i-1}, x_{2i})^T$  and   $ v= (x_{2i+1}, x_{2i+2})^T$. Recall that  we used Claim \ref{claim2}  to find $K_{i}: = c_2$   for  $c_1 := K_{i+1}$.   
Therefore, 
\begin{equation}\label{oba} \left\|\left(\!\!\!\begin{array}{l}
(Jx)_{2i-1}\\
(Jx)_{2i}
\end{array}\!\!\!\right)  - b^{(i)} \right\|_2 <  K_{i} -\tfrac12 \quad \text{or} \quad  \left\|\left(\!\!\!\begin{array}{l}
(Jx)_{2i-1}\\
(Jx)_{2i}
\end{array}\!\!\!\right)  - b^{(i)} \right\|_2 <  \left\|\left(\!\!\!\begin{array}{l}
x_{2i-1}\\
x_{2i}
\end{array}\!\!\!\right) \right\|_2 - 1.  
\end{equation}

\end{itemize}
For the digit $d$ we have described above and an  $\varepsilon  \in \mathcal{E}$,  we focus on  $y =Jx -d+\varepsilon$. 
 Let us realize that the inequality $\|\varepsilon\|_\infty < \tfrac13$ means  $|\varepsilon_j|\leq \tfrac13$ for each coordinate $j$ and  thus 
$\|(\varepsilon_{2i-1}, \varepsilon_{2i})\|_2 \leq  \tfrac12$. 

\medskip

For each $i> {\rm ind}(x)$,  we have  $ \|(x_{2i-1}, x_{2i})\|_2 < K_{i}$,   and  thus  both inequalities in \eqref{oba}  imply 
$$
 \left\|\left(\!\!\!\begin{array}{l}
(Jx)_{2i-1}\\
(Jx)_{2i}
\end{array}\!\!\!\right)  - b^{(i)} +     \left(\!\!\!\begin{array}{l}
\varepsilon_{2i-1}\\
\varepsilon_{2i}
\end{array}\!\!\!\right)  \right\|_2 < K_{i}.
$$
Consequently,  ${\rm ind}(y) \leq  {\rm ind}(x)$. 

\medskip 

 If  ${\rm ind}(y) =  {\rm ind}(x) =:j$,   then necessarily   
$\|(y_{2j-1}, y_{2j})\|_2 =  \left\|\left(\!\!\!\begin{array}{l}
(Jx)_{2j-1}\\
(Jx)_{2j}
\end{array}\!\!\!\right)  - b^{(j)} +   \left(\!\!\!\begin{array}{l}
\varepsilon_{2j-1}\\
\varepsilon_{2j}
\end{array}\!\!\!\right)  \right\|_2 > K_{j}$  and the left most inequality in  \eqref{oba} cannot hold true. The  validity of the right most 
inequality implies $\|(y_{2i-1}, y_{2i})\|_2  \leq \|(x_{2i-1}, x_{2i})\|_2 - \tfrac12$.

\end{proof}

In case of real $\lambda$, that is, $\lambda = \pm1,$ the approach is analogous, only easier. The digit set 
$\{(a_1,\dots,a_m)^T : a_i\in\{-1,0,1\}\}$ now suffices for the analogy Lemma~\ref{unit}.

\begin{coro}\label{unitCoro} Let $J=J_m(\lambda)$ with $|\lambda|=1$. Then there exists $\mathcal D\subset\Z^m$ finite with the following property. For each $x \in \R^{m}$ there exists  a sequence  $(d_{i+1})_{i\in \N}$ of digits from $\mathcal{D} $  such that  for any  sequence  $(\varepsilon_{i+1})_{i\in \N}$ from  the perturbation set $\mathcal{E}$, the sequence   defined recursively 

\medskip 

\centerline{$x^{(0)} = x$  \ \ and \ \ \ $x^{(n+1)}:= Jx^{(n)} - d_{n+1}+\varepsilon_{n+1}$ for $n \in \N$}

\medskip 

\noindent  satisfies  ${\rm ind}(x^{(N)})  = 0$ for all sufficiently large $N \in \N$.   In particular,  there exists a constant $C$ such that $\|x^{(N)}\|_\infty  < C$  for all sufficiently large $N\in \N$.

\end{coro}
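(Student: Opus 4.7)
The plan is to iterate Lemma~\ref{unit} (and, for $\lambda=\pm 1$, the analogous statement sketched in the paragraph preceding the corollary). Given $x \in \R^m$, I define the sequences $(d_{n+1})_{n\in\N}$ and $(x^{(n)})_{n\in\N}$ recursively: with $x^{(0)}=x$, I take $d_{n+1}\in\mathcal{D}$ to be the digit produced by Lemma~\ref{unit} applied to $x^{(n)}$, and then set $x^{(n+1)}:=Jx^{(n)}-d_{n+1}+\varepsilon_{n+1}$ for the given (arbitrary) perturbation $\varepsilon_{n+1}\in\mathcal{E}$. Crucially, the digit chosen by Lemma~\ref{unit} depends only on $x^{(n)}$, and the conclusions of the lemma hold for \emph{every} admissible $\varepsilon_{n+1}$, so the construction is legitimate regardless of how the perturbations are chosen.

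Next, I would track the integer-valued quantity ${\rm ind}(x^{(n)})\in\{0,1,\dots,\ell\}$. By the first bullet of Lemma~\ref{unit}, the sequence $n\mapsto {\rm ind}(x^{(n)})$ is non-increasing, hence it stabilizes at some value $j^*$ from some index $n_0$ on. Suppose for contradiction that $j^*\geq 1$. Then for all $n\geq n_0$ the equality ${\rm ind}(x^{(n+1)})={\rm ind}(x^{(n)})=j^*$ holds, and the second bullet of Lemma~\ref{unit} forces
\begin{equation*}
\bigl\|(x^{(n+1)}_{2j^*-1},x^{(n+1)}_{2j^*})\bigr\|_2 \leq \bigl\|(x^{(n)}_{2j^*-1},x^{(n)}_{2j^*})\bigr\|_2-\tfrac12.
\end{equation*}
Iterating this strict decrease drives the non-negative quantity below zero in finitely many steps, a contradiction. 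Hence $j^*=0$, i.e.\ ${\rm ind}(x^{(N)})=0$ for all sufficiently large $N$.

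Once ${\rm ind}(x^{(N)})=0$, Definition~\ref{defIndex} yields $\|(x^{(N)}_{2i-1},x^{(N)}_{2i})\|_2<K_i$ for every $i=1,\ldots,\ell$. Taking $C:=\max_{1\leq i\leq\ell}K_i$ immediately gives $\|x^{(N)}\|_\infty<C$, as required. The case $\lambda=\pm 1$ is handled by the same iteration scheme: the index is built from the scalar absolute values $|x_i|$ in place of the norms $\|(x_{2i-1},x_{2i})\|_2$, the one-dimensional analogues of Claims~\ref{claim1} and~\ref{claim2} yield the digit set $\{-1,0,1\}^m$ and the corresponding non-increase/strict-decrease dichotomy, and the stabilization argument above goes through verbatim.

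I do not expect any real obstacle, since the hard work (a single-step decrease invariant uniform in the perturbation $\varepsilon$) has already been done in Lemma~\ref{unit}; the corollary is essentially a pigeonhole/descent argument on the integer index. The only point that deserves a careful mention is that $d_{n+1}$ is selected \emph{before} seeing $\varepsilon_{n+1}$ and nevertheless works for it, which is exactly the quantifier order guaranteed by Lemma~\ref{unit}.
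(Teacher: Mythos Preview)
Your iterative descent on ${\rm ind}$ via Lemma~\ref{unit} is exactly the argument the paper intends, and the stabilization/contradiction step is carried out correctly.

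There is, however, a real issue with your final paragraph. The corollary as written asserts $\exists\,(d_n)\ \forall\,(\varepsilon_n)$: the \emph{entire} digit sequence must be fixed knowing only $x$. Your construction does not do this, since $d_{n+1}$ is chosen as a function of $x^{(n)}$, and $x^{(n)}$ already depends on $\varepsilon_1,\dots,\varepsilon_n$; saying that ``$d_{n+1}$ is selected before seeing $\varepsilon_{n+1}$'' is not the same as fixing the whole sequence $(d_n)$ before any perturbation is revealed. In fact the literal quantifier order is unachievable: for $J=(1)$ on $\R$ with $\mathcal D=\{-1,0,1\}$ one has $x^{(N)}=x-\sum_{n\le N}d_n+\sum_{n\le N}\varepsilon_n$, and for any fixed $(d_n)$ the two constant perturbation sequences $\varepsilon_n\equiv\pm\tfrac14$ produce trajectories differing by $N/2$, so they cannot both remain bounded. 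What your argument actually establishes is the \emph{adaptive} version---a digit-selection rule $x^{(n)}\mapsto d_{n+1}$ robust to the next perturbation---and this is precisely what the paper needs and uses when it later sets $\varepsilon_{n+1}=\tilde d_{n+1}-d_{n+1}$ in deriving Claim~B in the proof of Proposition~\ref{skoroHotovo}. So the mathematics is sound; only your claim to have matched the stated quantifier order is wrong, and that is really a slip in the paper's formulation of the corollary rather than a defect in your reasoning.
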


\subsection{A real Jordan block to an eigenvalue  $\lambda$ strictly inside the unit circle }
\begin{lem}\label{contract} Let $J\in \R^{m\times m}$ be the real Jordan block to $\lambda$ of modulus $<1$ and     $\mathcal{E}$ be the perturbation set.   Then there exist a norm $\|.\|_c$ of  $\R^{m}$ and  a constant $\gamma$ such that

\medskip 
 for any  $x \in \R^m$  and  any $\varepsilon \in \mathcal{E}$, the vector $y=Jx+\varepsilon$ satisfies: 

\medskip 

\begin{itemize}

\item  If \ $ \| x\|_c   \leq \gamma$, then $\|y\|_c \leq \gamma$;
\medskip

\item   If  \ $ \| x\|_c   \geq \gamma$, then $\|y\|_c \leq \|x\|_c - \tfrac12$. 
 
\end{itemize}
\end{lem}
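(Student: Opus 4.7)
The plan is to construct $\|\cdot\|_c$ as a suitably weighted Euclidean (or $\infty$-) norm that turns $J$ into a genuine contraction, and then balance the contraction rate against the size of the perturbation to pick $\gamma$.

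First I would exploit the structure of the real Jordan block. Write $J = D + N$, where $D$ is the block-diagonal part (either $\lambda$ or the $2\times 2$ rotation-scaling $R$, repeated along the diagonal) and $N$ is the nilpotent superdiagonal of $1$'s (or $2\times 2$ identity blocks). For $\delta>0$, let $\Delta_\delta$ be the diagonal matrix $\mathrm{diag}(1,\delta,\delta^2,\ldots,\delta^{m-1})$ (in the complex-eigenvalue case, each $\delta^k$ is repeated twice). A direct computation shows
\[
\Delta_\delta^{-1} J \Delta_\delta \;=\; D + \delta N,
\]
because $D$ commutes with $\Delta_\delta$ while each superdiagonal entry is scaled by $\delta$. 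Since $\|D\|_2 = |\lambda| < 1$, choosing $\delta$ small enough gives $\|\Delta_\delta^{-1} J \Delta_\delta\|_2 \le \rho$ for some $\rho \in (|\lambda|,1)$. I then define
\[
\|x\|_c \;:=\; \|\Delta_\delta^{-1} x\|_2.
\]
By construction, $\|Jx\|_c = \|\Delta_\delta^{-1} J \Delta_\delta \cdot \Delta_\delta^{-1} x\|_2 \le \rho \|x\|_c$ for every $x\in\R^m$.

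Next I would estimate the damage done by the perturbation. Every $\varepsilon\in\mathcal{E}$ satisfies $\|\varepsilon\|_2 < \sqrt{m}/3$, hence
\[
\|\varepsilon\|_c \;=\; \|\Delta_\delta^{-1}\varepsilon\|_2 \;\le\; \delta^{-(m-1)}\tfrac{\sqrt{m}}{3} \;=:\; B.
\]
The triangle inequality now yields, for $y=Jx+\varepsilon$,
\[
\|y\|_c \;\le\; \rho\|x\|_c + B.
\]

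Finally I would choose $\gamma$ to satisfy both required bullets. Set
\[
\gamma \;:=\; \frac{B+\tfrac12}{1-\rho}.
\]
If $\|x\|_c \le \gamma$, then $\|y\|_c \le \rho\gamma + B \le \gamma$ (since $B \le (1-\rho)\gamma$ by our choice). If $\|x\|_c \ge \gamma$, then $(1-\rho)\|x\|_c \ge B+\tfrac12$, so $\|y\|_c \le \rho\|x\|_c + B \le \|x\|_c - \tfrac12$, as required.

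I do not expect a major obstacle here: the only slightly delicate point is verifying that the weighting $\Delta_\delta$ simultaneously compresses all the real Jordan blocks of the given shape (especially in the complex-eigenvalue case where one must weight in pairs so that $\Delta_\delta$ commutes with $D$); this is a purely matrix-algebraic observation. Everything else is a standard contraction-plus-bounded-error argument.
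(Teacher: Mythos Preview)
Your proof is correct and follows essentially the same route as the paper. The paper invokes a result of Isaacson--Keller to obtain a norm with $\|Jx\|_c \le \beta\|x\|_c$ for some $\beta\in(|\lambda|,1)$ and then sets $\gamma=\frac{1}{1-\beta}\bigl(\tfrac12+E\bigr)$ with $E=\sup_{\varepsilon\in\mathcal{E}}\|\varepsilon\|_c$; you simply carry out the Isaacson--Keller construction explicitly via the diagonal rescaling $\Delta_\delta$, arriving at the identical contraction-plus-bounded-error estimate and the same formula for $\gamma$.
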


\begin{proof}  Let us choose $\beta$ such that $1> \beta > |\lambda|$.  By  Theorem 3 from \cite{IsaacsonKeller}, there exists a norm $ \| x\|_c $  of  $\R^m$  such that    $\|Jz\|_c \leq \beta \|z\|_c$  \  for all $z \in \R^m$. Then   
\medskip

\centerline{$\|Jx+ \varepsilon\|_c \leq \|Jx\|_c + \|\varepsilon\|_c\leq \beta \|x\|_c   +E$ \ \  where $E=\max\{\|\varepsilon\|_c : \varepsilon \in \mathcal{E}\}$.}

\medskip

 \noindent Now,  it is enough to check that 

 $ \| x\|_c   \leq   \frac{1}{1-\beta}\bigl(\tfrac12+E\bigr) $,  implies  $ \beta \|x\|_c   +E \leq   \frac{1}{1-\beta}\bigl(\tfrac12+E\bigr)$

\noindent and 

 $ \| x\|_c   \geq   \frac{1}{1-\beta}\bigl(\tfrac12+E\bigr) $,  implies  $ \beta \|x\|_c   +E \leq \|x\|_c   - \tfrac12 $. 
\medskip

\noindent Therefore, we put $\gamma = \frac{1}{1-\beta}\bigl(\tfrac12+E\bigr)$. 

\end{proof}

\begin{coro}\label{contractCoro} Let $J\in \R^{m\times m}$ be the real Jordan block to $\lambda$ of modulus $<1$. Then there exists a constant $\gamma$  and a norm $ \| x\|_c $  of  $\R^m$  such that  for each $x \in \R^{m}$  and   for any  sequence  $(\varepsilon_{i+1})_{i\in\N}$ from  $\mathcal{E}$, the sequence   defined recursively 
$$x^{(0)} = x  \ \ and \ \ \ x^{(n+1)}:= Jx^{(n)} +\varepsilon_{n+1} \quad\text{ for }\quad n \in \N$$
satisfies   $\|x^{(N)}\|_c  < \gamma$  for all sufficiently large $N \in \N$.

\end{coro}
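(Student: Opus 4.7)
The plan is to obtain Corollary \ref{contractCoro} as an immediate iterated consequence of Lemma \ref{contract}. I would take the norm $\|\cdot\|_c$ and the constant $\gamma$ produced by that lemma, and analyze the long-term behavior of the orbit $x^{(n)}$ using the two alternatives the lemma offers at each step. The ball $B=\{z\in\R^m : \|z\|_c\leq\gamma\}$ will serve as the eventual trapping region for every such orbit, regardless of the perturbation sequence.

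First I would show forward-invariance: if $\|x^{(n)}\|_c\leq\gamma$ at some step, the first bullet of Lemma \ref{contract} applied with the perturbation $\varepsilon_{n+1}$ gives $\|x^{(n+1)}\|_c\leq\gamma$, and induction traps the whole subsequent orbit inside $B$. In fact the estimate $\|Jx+\varepsilon\|_c\leq\beta\|x\|_c+E$ that was used in the lemma's proof (with $\beta<1$) actually maps $B$ strictly into its interior, upgrading the bound to the strict inequality $\|x^{(N)}\|_c<\gamma$ required by the statement once the orbit has entered $B$. Second, I would use the other alternative to show the orbit must enter $B$ in finite time: whenever $\|x^{(n)}\|_c>\gamma$, the second bullet of the lemma gives $\|x^{(n+1)}\|_c\leq\|x^{(n)}\|_c-\tfrac12$, so this strict decrement by a fixed positive amount can continue for at most $N_0:=\lceil 2(\|x\|_c-\gamma)\rceil$ iterations before the orbit lands in $B$. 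Combining the two steps, one more application of the first bullet yields $\|x^{(N)}\|_c<\gamma$ for every $N\geq N_0+1$.

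There is no real obstacle here: all the substantive work has already been absorbed into Lemma \ref{contract}, and this corollary is simply its asymptotic reformulation. The only subtlety worth verifying carefully is that both the decrement $\tfrac12$ and the threshold $\gamma$ furnished by the lemma depend only on $J$ and the perturbation set $\mathcal{E}$, not on the specific sequence $(\varepsilon_{i+1})$; this uniformity is precisely what permits the finite-time-capture argument to be applied simultaneously for every admissible perturbation sequence.
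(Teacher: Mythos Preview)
Your proposal is correct and is exactly the argument the paper has in mind: the corollary is stated without proof immediately after Lemma~\ref{contract}, and your iteration of the two alternatives of that lemma (forward-invariance of the $\gamma$-ball plus the fixed decrement outside it) is precisely the intended justification. Your remark that the estimate $\|Jx+\varepsilon\|_c\le\beta\|x\|_c+E$ actually yields the strict inequality once the orbit has entered the ball is a nice touch that cleanly matches the strict bound in the corollary's statement.
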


\subsection{A real Jordan block to an eigenvalue  $\lambda$ strictly outside the unit circle }
\begin{lem}\label{expand}   Let $J\in \R^{m\times m}$ be the real Jordan block to $\lambda$ of modulus $>1$ and     $\mathcal{E}$ be the perturbation set.   Then there exists a digit set $\mathcal{D} \subset \Z^m$ with the property: 

\medskip
for each $x \in  \R^{m}$    with  $\|x\|_\infty \leq 1$  there exists a digit  $ d \in \mathcal{D}$  such that
 for all  $\varepsilon  \in  \mathcal{E}  $,   the vector $y= Jx-d+ \varepsilon$  satisfies  $\|y\|_\infty\leq 1$.  

\end{lem}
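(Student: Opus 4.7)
The approach is plain coordinate-wise rounding to the nearest integer. The key observation is that, since $J$ is a fixed matrix, the image of the unit cube $\{x:\|x\|_\infty\leq 1\}$ under $J$ is contained in a box whose size depends only on the operator norm $\|J\|_\infty$ (induced by $\|\cdot\|_\infty$ on both copies of $\R^m$). Rounding each coordinate of $Jx$ to its nearest integer therefore produces a lattice point at sup-distance at most $\tfrac12$ from $Jx$, and these lattice points all lie in a bounded — hence finite — subset of $\Z^m$.

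Concretely, first I would set $K := \|J\|_\infty + \tfrac12$ and define
\[
\mathcal{D} := \Z^m \cap \bigl\{d\in\R^m : \|d\|_\infty \leq K\bigr\},
\]
which is finite. Given $x\in\R^m$ with $\|x\|_\infty\leq 1$, I pick $d = (d_1,\ldots,d_m)^T \in \Z^m$ by letting $d_i$ be a nearest integer to $(Jx)_i$ for each $i$ (ties broken arbitrarily). Then $\|Jx-d\|_\infty \leq \tfrac12$ by construction, and
\[
\|d\|_\infty \leq \|Jx\|_\infty + \tfrac12 \leq \|J\|_\infty\cdot\|x\|_\infty + \tfrac12 \leq K,
\]
so $d\in\mathcal{D}$. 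For every $\varepsilon\in\mathcal{E}$, the triangle inequality yields
\[
\|y\|_\infty = \|Jx-d+\varepsilon\|_\infty \leq \tfrac12 + \tfrac13 = \tfrac56 < 1,
\]
which is the required bound.

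I do not anticipate any real obstacle in this lemma: the hypothesis $|\lambda|>1$ is not used in its proof at all. Its role only appears later, when Lemma~\ref{expand} is combined with Corollaries~\ref{unitCoro} and~\ref{contractCoro} in the proof of Proposition~\ref{skoroHotovo} — the expansivity of $J$ on the invariant subspace $V_e$ is what ensures that starting iterates can be shifted into $\{\|\pi_e(\cdot)\|_\infty\leq 1\}$ and that the remainder $J^{-N}y$ is well controlled as $N\to\infty$. The present stand-alone statement, however, reduces to the elementary observation that on a bounded region integer rounding lands in a finite subset of $\Z^m$ with error below $\tfrac12$.
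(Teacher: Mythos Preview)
Your proof is correct and follows essentially the same idea as the paper: bound $\|Jx\|_\infty$ on the unit cube, round $Jx$ coordinatewise to the nearest integer, and observe that the resulting digits lie in a fixed finite box while $\|Jx-d+\varepsilon\|_\infty\le\tfrac12+\tfrac13<1$. The only difference is cosmetic---you use $K=\|J\|_\infty+\tfrac12$ whereas the paper writes the cruder bound $2|\lambda|+2+E$ (which majorizes $\|J\|_\infty+\tfrac12$ for a real Jordan block)---and your remark that $|\lambda|>1$ is not actually needed here is accurate.
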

\begin{proof} Since   $\|Jx\|_\infty \leq (2|\lambda| + 1)\|x\|_\infty $ for each $x\in  \R^{m}$, we put 
$\mathcal{D} = \{ d\in \Z^m : \|d\|_\infty \leq  2|\lambda| + 2 + E\}$,  where $E=\max\{\|\varepsilon\|_\infty : \varepsilon \in \mathcal{E}\}$.

\end{proof}

\begin{coro}\label{expandCoro}  Let $J\in \R^{m\times m}$ be the real Jordan block to $\lambda$ of modulus $>1$,  and $\mathcal{D}$   and   $\mathcal{E}$  be  as in Lemma \ref{expand}. Then for each $x \in \R^{m}$ there exists  a sequence  $(d_{i+1})_{i\in \N}$ of digits from $\mathcal{D} $  such that  for any  sequence  $(\varepsilon_{i+1})_{i\in \N}$ from  $\mathcal{E}$ the sequence   defined recursively 

\medskip 

\centerline{$x^{(0)} = x$  \ \ and \ \ \ $x^{(n+1)}:= Jx^{(n)} - d_{n+1}+\varepsilon_{n+1}$  \ \ for $n \in \N$,}

\medskip 

\noindent  satisfies  $\|x^{(N)}\|_\infty  < 1$  for all sufficiently large $N \in \N$.   

\end{coro}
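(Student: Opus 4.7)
The plan is to iterate Lemma~\ref{expand}, choosing each digit $d_{n+1}$ adaptively from the current iterate $x^{(n)}$. The key structural fact provided by Lemma~\ref{expand} is that the closed ball $B := \{x \in \R^m : \|x\|_\infty \leq 1\}$ is forward-invariant in a perturbation-uniform sense: for every $x \in B$ there exists $d \in \mathcal{D}$ such that $Jx - d + \varepsilon \in B$ for every admissible $\varepsilon \in \mathcal{E}$. Hence, once the trajectory lies inside $B$, it remains inside $B$ forever. This argument presupposes that the initial point $x$ satisfies $\|x\|_\infty \leq 1$, which is exactly the regime guaranteed in the intended application within Proposition~\ref{skoroHotovo}, where the corollary is invoked on the projection of the current iterate onto the expansive subspace $V_e$.

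Formally, I would proceed by induction on $n$, with base case $x^{(0)} = x$ satisfying $\|x^{(0)}\|_\infty \leq 1$. For the inductive step, suppose $d_1,\dots,d_n \in \mathcal{D}$ have already been selected (possibly depending on the observed $\varepsilon_1,\dots,\varepsilon_{n-1}$) in such a way that $\|x^{(n)}\|_\infty \leq 1$. Applying Lemma~\ref{expand} to the vector $x^{(n)}$ produces a digit $d_{n+1} \in \mathcal{D}$ with the property that $\|J x^{(n)} - d_{n+1} + \varepsilon\|_\infty \leq 1$ for every $\varepsilon \in \mathcal{E}$; specialising to the actual perturbation $\varepsilon_{n+1}$ then yields $\|x^{(n+1)}\|_\infty \leq 1$, which closes the induction. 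The adaptive choice of $d_{n+1}$ as a function of $x^{(n)}$ (and thus of $\varepsilon_1,\dots,\varepsilon_n$) is compatible with the quantifier order of the corollary, because Lemma~\ref{expand} delivers $d_{n+1}$ without requiring foreknowledge of the future perturbation $\varepsilon_{n+1}$.

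Upgrading the non-strict estimate $\|x^{(N)}\|_\infty \leq 1$ to the strict inequality $\|x^{(N)}\|_\infty < 1$ demanded by the corollary is a small refinement: inspecting the proof of Lemma~\ref{expand}, one selects $d_{n+1} \in \Z^m$ by coordinate-wise rounding of $J x^{(n)}$, so that $\|J x^{(n)} - d_{n+1}\|_\infty \leq \tfrac12$. Combined with $\|\varepsilon_{n+1}\|_\infty < \tfrac13$, the triangle inequality gives $\|x^{(n+1)}\|_\infty < \tfrac12 + \tfrac13 < 1$ strictly, so that $\|x^{(N)}\|_\infty < 1$ holds for every $N \geq 1$, which is more than the ``sufficiently large $N$'' demanded. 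There is no real obstacle in this argument: the substantive work has already been done in Lemma~\ref{expand}, and the corollary merely packages its iteration, exactly in parallel with how Corollary~\ref{unitCoro} packages the iteration of Lemma~\ref{unit}.
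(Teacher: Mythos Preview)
Your proof is correct and is exactly what the paper intends: the corollary is stated there without proof, as an immediate iteration of Lemma~\ref{expand}, and that is precisely what you carry out. Your observation that the hypothesis $\|x\|_\infty\le 1$ is tacitly required (and is supplied in the application inside Proposition~\ref{skoroHotovo}) is also correct, as is your refinement to a strict inequality via $\|Jx^{(n)}-d_{n+1}\|_\infty\le\tfrac12$ and $\|\varepsilon_{n+1}\|_\infty<\tfrac13$.

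One caveat: your sentence asserting that the adaptive choice of $d_{n+1}$ ``is compatible with the quantifier order of the corollary'' is not literally right. As written, the corollary has the form $\exists(d_i)\,\forall(\varepsilon_i)$, which would forbid $d_{n+1}$ from depending on $\varepsilon_1,\dots,\varepsilon_n$; and for expansive $J$ that literal statement is actually false, since with the digits frozen in advance the perturbation contribution $\sum_{k=1}^N J^{N-k}\varepsilon_k$ can be driven arbitrarily large by adversarial $(\varepsilon_k)$. The paper is simply informal here: what is meant, and what your induction proves, is the strategy version in which $d_{n+1}$ may be chosen once $x^{(n)}$ is known. This is exactly what the proof of Proposition~\ref{skoroHotovo} uses (there $\varepsilon_{n+1}=\tilde d_{n+1}-d_{n+1}$ is determined by the digit itself, so the recursion closes), so your argument delivers precisely what is needed; only the sentence about quantifier compatibility should be rephrased.
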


\subsection{Completion of proof of Proposition \ref{skoroHotovo}}

Now we combine properties of all types of the real Jordan  blocks.

\begin{proof}[Proof of Proposition \ref{skoroHotovo}]
Let $J_1, J_2, \ldots , J_s$ be the real Jordan blocks of the size $m_1, \ldots, m_s$ respectively,  such that 
$J=J_1\oplus J_2 \oplus \cdots \oplus J_s$. 

To each block $J_k \in \R^{m_k}$ corresponding to  $\lambda$ with $|\lambda|\geq 1$ we find a constant $C_k$ and  a finite digit set $\mathcal{D}_k\subset \Z^{m_k}$  with the properties described in   Corollaries \ref{expandCoro} and \ref{unitCoro}.

 To  each block $J_k$ corresponding to  $\lambda$ with $|\lambda|< 1$, we  assign the constant $\gamma_k$  found in Corollary \ref{contractCoro}  for the norm $\|.\|_c$.    As two norms on a finite dimensional space are equivalent, there exists a constant  $C_k$ such that $\|x\|_c < \gamma_k$ implies  $\|x\|_\infty < C_k$.   We put $\mathcal{D}_k=\{0\} \subset \R^{m_k}$.

We use the digit sets $\mathcal{D}_k$  for construction of a new digit set $\widetilde{\mathcal{D}}\subset   \Z^m$
\begin{equation}\label{abeceda}
\tilde{d}\in \widetilde{\mathcal{D}} \qquad \text{ if and only if } \qquad 
\tilde{d}=\left(\begin{smallmatrix}
d^{(1)}\\\vspace*{-2mm}
d^{(2)} \\\vspace*{-1mm}
\vdots\\[2mm]
d^{(s)} 
\end{smallmatrix}\right) \in \mathbb{Z}^{m}, \ \ 
\text{where} \ \ d^{(k)}  \in \mathcal{D}_k, \text{ for }  k=1, 2, \ldots, s.
\end{equation}
Put $C =\max_k{C_k}$. In this notation, we get the following.

\medskip 
\noindent {\bf Claim A:}
 for each      $x \in \R^m$  with   $\|\pi_e(x)\|_\infty \leq 1$  there exists a sequence  $(\tilde{d}_i)_{i\in \N}$ of digits from $\widetilde{\mathcal{D}} $  such that  for any  sequence  $(\varepsilon_i)_{i\in \N}$ from  $\mathcal{E}$ the sequence   defined recursively 
\begin{equation}\label{jesteIntegerAlphabet}
\text{$x^{(0)} = x$  \ \ and \ \ \ $x^{(n+1)}:= Jx^{(n)} - \tilde{d}_{n+1}+\varepsilon_{n+1}$  \ \ for $n \in \N$,} 
\end{equation}
\noindent  satisfies  $\|x^{(N)}\|_\infty  < C$  for all sufficiently large $N \in \N$.  

\medskip  

Since $L$ is close to  $ \mathbb{Z}^{m}$,   we can assign  to  each $\tilde{d} \in \widetilde{\mathcal{D}}$ a lattice point  $ d\in L$ such that $d \in \tilde{d} +\mathcal{E}$. We obtain a new digit set  $\mathcal{D} \subset L$ and the size of  $\mathcal{D}$ does not exceed the size of   $\widetilde{\mathcal{D}}$.   We rewrite Equation \eqref{jesteIntegerAlphabet} for the specific choice of the sequence   $(\varepsilon_i)_{i\in \N}$, namely for $\varepsilon_i : = {\tilde{d}}_i - d_i \in \mathcal{E}$.   We get a modification of the previous claim

\medskip 
\noindent {\bf Claim B:}
 for each      $x \in \R^m$  with   $\|\pi_e(x)\|_\infty \leq 1$  there exists a sequence  $({d}_i)_{i\in \N}$ of digits from ${\mathcal{D}} $  such that   the sequence   defined recursively 
$$
\text{$x^{(0)} = x$  \ \ and \ \ \ $x^{(n+1)}:= Jx^{(n)} - {d_{n+1}}$  \ \ for $n \in \N$,} 
$$
\noindent  satisfies  $\|x^{(N)}\|_\infty  < C$  for all sufficiently large $N \in \N$.  

\medskip
Multiplying the equality  $x^{(n+1)}= Jx^{(n)} - {d_{n+1}}$  by $J^{-n -1}$  and summing up for $n= 0,1,\ldots, N-1$ we get 
$$\sum_{n=0}^{N-1} J^{-n-1}x^{(n+1)} = \sum_{n=0}^{N-1} J^{-n}x^{(n)} - \sum_{n=0}^{N-1} J^{-n-1}d_{n+1}   \quad \text{and thus}   \quad \sum_{n=1}^{N} J^{-n}x^{(n)} = \sum_{n=0}^{N-1} J^{-n}x^{(n)} - \sum_{n=1}^{N} J^{-n}d_{n}\,.$$ 
It implies $ J^{-N}x^{(N)} =  x^{(0)} - \sum_{n=1}^{N} J^{-n}d_{n}$  with     $\|x^{(N)}\|_\infty  < C$, as required in the statement of the proposition.

\end{proof}

\section{${\rm Fin}_{\mathcal{D}}(M)$ \ \  versus  \ \  $\bigcup\limits_{k\in \mathbb{N}} \frac{1}{\Delta^k}\mathbb{Z}^{m}$}
In general, we have that ${\rm Fin}_{\mathcal D}(M)\subseteq\bigcup\limits_{k\in \mathbb{N}} \frac{1}{\Delta^k}\mathbb{Z}^{m}$. In this section we describe when the inclusion turns into an equality.
The inverse to a matrix $M \in \mathbb{Z}^{m\times m}$ with the determinant  $\Delta \neq 0$  belongs to  $\frac{1}{\Delta}\mathbb{Z}^{m\times m}$. Therefore, 
the definition of the set  ${\rm Fin}_{\mathcal{D}}(M)$  directly implies the following properties. 

\begin{lem}\label{list}  Let $M \in \mathbb{Z}^{m\times m}$ with  $\det M = \Delta \neq 0$ and $\mathcal{D} \subset \mathbb{Z}^m$, $\mathcal{D}$ finite.  Then 
\begin{enumerate}  
\item  $M^k\Bigl({\rm Fin}_{\mathcal{D}}(M)\Bigr) =  {\rm Fin}_{\mathcal{D}}(M)$ for each $k \in \Z$\,.

\item   ${\rm Fin}_{\mathcal{D}}(M) \subseteq  \bigcup\limits_{k\in \mathbb{N}} \frac{1}{\Delta^k}\mathbb{Z}^{m} $\,.

\item   If $ \mathbb{Z}^m \subset {\rm Fin}_{\mathcal{D}}(M)$,  then  ${\rm Fin}_{\mathcal{D}}(M)$ is closed under addition and subtraction. 
\end{enumerate}
\end{lem}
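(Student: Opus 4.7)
The three assertions should fall out directly from unpacking the definition of ${\rm Fin}_{\mathcal D}(M)$ and using two elementary facts: that $M^{-1}\in\tfrac{1}{\Delta}\Z^{m\times m}$ (Cramer's rule, via the adjugate), and that representations in the system can be re-indexed by shifting exponents. I expect no serious obstacle; the only thing requiring a small idea is part (3), where the obvious combine-term argument fails because the digit sums need not lie in $\mathcal D$.

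For (1), given $z=\sum_{k\in I}M^kd_k\in{\rm Fin}_{\mathcal D}(M)$ and $j\in\Z$, I would simply reindex by $k\mapsto k+j$ to get $M^jz=\sum_{k'\in I+j}M^{k'}d_{k'-j}$, an expression of the required form with non-empty finite index set $I+j\subset\Z$ and digits in $\mathcal D$. This gives the inclusion $M^j({\rm Fin}_{\mathcal D}(M))\subseteq{\rm Fin}_{\mathcal D}(M)$ and, applied to $j$ and to $-j$, the equality.

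For (2), I would set $N:=\max\{0,-\min I\}$, so that all exponents $k+N$ with $k\in I$ are $\geq 0$. Factoring out $M^{-N}$ gives $z=M^{-N}\sum_{k\in I}M^{k+N}d_k$, where the sum is a $\Z$-linear combination of non-negative powers of $M$ applied to integer digits, hence lies in $\Z^m$. Since $M^{-N}$ has entries in $\tfrac{1}{\Delta^N}\Z$, this places $z$ in $\tfrac{1}{\Delta^N}\Z^m\subseteq\bigcup_{k\in\N}\tfrac{1}{\Delta^k}\Z^m$.

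For (3), the cleanest route is to reduce addition/subtraction in ${\rm Fin}_{\mathcal D}(M)$ to addition/subtraction in $\Z^m$ by clearing negative exponents. Take $u,v\in{\rm Fin}_{\mathcal D}(M)$ with representations indexed by $I_u,I_v$. Let $K\in\N$ satisfy $K\geq-\min(I_u\cup I_v)$. Exactly as in the proof of (2), both $M^Ku$ and $M^Kv$ are in $\Z^m$. By hypothesis $\Z^m\subseteq{\rm Fin}_{\mathcal D}(M)$, so $M^Ku\pm M^Kv\in\Z^m\subseteq{\rm Fin}_{\mathcal D}(M)$. Finally, applying the already-established part (1) with the exponent $-K$ gives $u\pm v=M^{-K}(M^Ku\pm M^Kv)\in{\rm Fin}_{\mathcal D}(M)$, which is the required closure under addition and subtraction. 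The small trick here is that we do not try to combine the two representations term by term (which would force us to represent digit sums in $\mathcal D$), but instead shift both into $\Z^m$, where addition is free, and then shift back using (1).
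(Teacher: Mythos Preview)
Your proof is correct and matches the paper's treatment: the paper does not give an explicit proof of this lemma at all, remarking only that the properties follow ``directly'' from the definition together with the fact that $M^{-1}\in\tfrac{1}{\Delta}\Z^{m\times m}$. Your write-up supplies exactly the details one would expect, and your handling of part~(3) via clearing denominators, adding in $\Z^m$, and shifting back with part~(1) is the natural argument.
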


Any  classical $b$-ary numeration system, with  base $b \in \N, b\geq 2$ and the canonical digit set $\mathcal{D} = \{0,1,\ldots, b-1\}$  can be considered as a matrix system with $M=b \in \Z^{1\times 1}$ and the determinant  $\Delta =b$.   The canonical digit set  allows to represent only non-negative numbers and obviously ${\rm Fin}_{\mathcal{D}}(M) =  \bigcup\limits_{k\in \mathbb{N}} \frac{1}{b^k}\mathbb{N} $.

In the next auxiliary claim we denote by $e_i \in \Z^m$ the vector $(\delta_{i1}, \delta_{i2}, \ldots, \delta_{im})^T$, where $\delta_{ij}$ is the Kronecker symbol, i.e. $\delta_{ij} = 0$, if $i\neq j$ and $\delta_{ii} = 1$.

\begin{claim}\label{C1}  $  {\rm Fin}_{\mathcal{D}}(M) =  \bigcup\limits_{k\in \mathbb{N}} \frac{1}{\Delta^k}\mathbb{Z}^{m}$ \ \  if and only if \ \    for each $i \in \{ 1,2, \ldots, m\}$ there  exists  $\ell_i \in \N$ such that  $\frac{1}{\Delta} {e}_i \in  M^{-\ell_i}(\Z^m)\,.$ 
\end{claim}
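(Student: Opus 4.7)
My plan is to prove the two implications separately, relying on the algebraic closure properties of $\mathrm{Fin}_{\mathcal{D}}(M)$ recorded in Lemma~\ref{list} under the implicit hypothesis that $\mathcal{D}$ is chosen so that $\mathbb{Z}^m \subseteq \mathrm{Fin}_{\mathcal{D}}(M)$ (as supplied by Theorem~\ref{main}); without this, additive closure is not available and the backward direction fails.

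For the necessity direction ($\Rightarrow$), I will simply inspect a representation. Assuming $\mathrm{Fin}_{\mathcal{D}}(M) = \bigcup_{k\in\N}\frac{1}{\Delta^k}\Z^m$, the vector $\frac{1}{\Delta}e_i$ belongs to $\mathrm{Fin}_{\mathcal{D}}(M)$, so there are a finite set $I\subset\Z$ and digits $d_k\in\mathcal{D}\subset\Z^m$ with $\frac{1}{\Delta}e_i = \sum_{k\in I} M^k d_k$. Setting $\ell_i := \max\{0,-\min I\}\in\N$ and multiplying by $M^{\ell_i}$, every exponent $k+\ell_i$ becomes non-negative, so (since $M$ has integer entries) the right-hand side lies in $\Z^m$. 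This is exactly $\frac{1}{\Delta}e_i\in M^{-\ell_i}(\Z^m)$.

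For the sufficiency direction ($\Leftarrow$), I will prove by induction on $k\in\N$ the inclusion $\frac{1}{\Delta^k}\Z^m \subseteq \mathrm{Fin}_{\mathcal{D}}(M)$. The base case $k=0$ is the standing hypothesis $\Z^m\subseteq\mathrm{Fin}_{\mathcal{D}}(M)$. For the inductive step, pick $v_i\in\Z^m$ with $\frac{1}{\Delta}e_i = M^{-\ell_i}v_i$ and write
\[
 \frac{1}{\Delta^{k}} e_i \;=\; M^{-\ell_i}\bigl(\tfrac{1}{\Delta^{k-1}} v_i\bigr).
\]
The inductive hypothesis places $\frac{1}{\Delta^{k-1}} v_i$ in $\mathrm{Fin}_{\mathcal{D}}(M)$, and Lemma~\ref{list}(1) shows that multiplication by $M^{-\ell_i}$ preserves this set; hence $\frac{1}{\Delta^k}e_i\in\mathrm{Fin}_{\mathcal{D}}(M)$. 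Additive closure from Lemma~\ref{list}(3) then promotes this to $\frac{1}{\Delta^k}\Z^m \subseteq \mathrm{Fin}_{\mathcal{D}}(M)$. Combining the inductive conclusion with the always-true reverse inclusion from Lemma~\ref{list}(2) yields equality.

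I do not expect any real obstacle: both directions reduce to the right manipulation of a single column vector $\frac{1}{\Delta}e_i$, and the induction essentially bootstraps the condition through successive powers of $\Delta$. The only point worth being careful about is the standing assumption $\Z^m\subseteq\mathrm{Fin}_{\mathcal{D}}(M)$, which is what makes $\mathrm{Fin}_{\mathcal{D}}(M)$ an additive subgroup and thus permits the passage from the generators $\frac{1}{\Delta^k}e_i$ to the full lattice $\frac{1}{\Delta^k}\Z^m$.
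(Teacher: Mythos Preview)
Your proof is correct and follows essentially the same approach as the paper: induction on $k$ for the $(\Leftarrow)$ direction, using the closure properties of Lemma~\ref{list}, and a direct inspection of a representation for $(\Rightarrow)$. You actually supply the details the paper omits when it declares the forward implication ``obvious,'' and you rightly flag the implicit hypothesis $\Z^m\subseteq\mathrm{Fin}_{\mathcal{D}}(M)$ on which Lemma~\ref{list}(3) depends.
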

\begin{proof}  $(\Leftarrow)$   Due to Item (2) of Lemma \ref{list}, it is enough to show that  $ \frac{1}{\Delta^k}\mathbb{Z}^{m} \subset {\rm Fin}_{\mathcal{D}}(M) $ for each $k \in \N$. We show it by induction on $k \in \N$.  

The assumption  $\frac{1}{\Delta} {e}_i \in  M^{-\ell_i}(\Z^m)$ gives $\tfrac{1}{\Delta}\Z^m\subset \sum_{i=1}^mM^{-\ell_i}(\Z^m)$.  Multiplying  this inclusion by $\tfrac{1}{\Delta^k}$ and 
 using the induction hypothesis we obtain 
$$
\tfrac{1}{\Delta^{k+1}} \Z^m  \subset  \sum_{i=1}^mM^{-\ell_i}\bigl(\tfrac{1}{\Delta^{k}} \Z^m\bigr) \subset \sum_{i=1}^mM^{-\ell_i}\bigl({\rm Fin}_\mathcal{D}(M)\bigr) =  {\rm Fin}_\mathcal{D}(M)\,.
$$
The last equality follows from Items (1) and (3) of Lemma \ref{list}.

$(\Rightarrow)$   This implication is obvious.  
\end{proof}

\begin{thm}\label{rovnost}  Let $M \in \mathbb{Z}^{m\times m}$, $\det M  = \Delta \neq 0$.   Let  $\mathcal{D} \subset \Z^m$ be a finite digit set such that   $\Z^m \subset  
  {\rm Fin}_{\mathcal{D}}(M)$.    Then 
$$  {\rm Fin}_{\mathcal{D}}(M) =  \bigcup\limits_{k\in \mathbb{N}} \frac{1}{\Delta^k}\mathbb{Z}^{m}  \qquad \Longleftrightarrow
 \qquad  \exists \ell \in \N \text{  such that }  M^\ell = \Theta \mod \Delta\,.$$
In particular,  if  $\det M = \pm 1$, then $ \mathbb{Z}^{m} =  {\rm Fin}_{\mathcal{D}}(M)$\,.

\end{thm}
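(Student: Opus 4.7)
The plan is to reduce the theorem to the elementary claim already proved (\cref{C1}) and then perform a short columnwise argument modulo $\Delta$. The statement of \cref{C1} says the desired equality is equivalent to the existence, for each standard basis vector $e_i$, of an exponent $\ell_i\in\N$ with $\tfrac{1}{\Delta}e_i\in M^{-\ell_i}\Z^m$. Rewriting this condition as $M^{\ell_i}e_i\in\Delta\Z^m$ — i.e., the $i$-th column of $M^{\ell_i}$ vanishes modulo $\Delta$ — brings the problem very close to the matrix congruence $M^\ell\equiv\Theta\pmod\Delta$ that we want to characterize.

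For the direction $(\Leftarrow)$, if $M^\ell\equiv\Theta\pmod\Delta$ then $\tfrac{1}{\Delta}M^\ell$ is an integer matrix, so for every $i$ the vector $\tfrac{1}{\Delta}M^\ell e_i$ lies in $\Z^m$, giving $\tfrac{1}{\Delta}e_i\in M^{-\ell}\Z^m$. This is precisely the hypothesis of \cref{C1} (with the same $\ell$ working for every $i$), so $\mathrm{Fin}_{\mathcal D}(M)=\bigcup_k\tfrac{1}{\Delta^k}\Z^m$.

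For the direction $(\Rightarrow)$, \cref{C1} yields integers $\ell_1,\dots,\ell_m$ with $M^{\ell_i}e_i\in\Delta\Z^m$. The key observation — which is the only non-formal step and is itself immediate — is a monotonicity in the exponent: since $M$ has integer entries, $M$ maps $\Delta\Z^m$ into itself, and therefore $M^{\ell_i+k}e_i=M^k(M^{\ell_i}e_i)\in\Delta\Z^m$ for every $k\ge 0$. Setting $\ell:=\max_i\ell_i$, every column of $M^\ell$ then lies in $\Delta\Z^m$, which is exactly $M^\ell\equiv\Theta\pmod\Delta$.

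Finally, the particular case $\det M=\pm 1$ is a one-line consequence: here $\Delta=\pm 1$, so $\bigcup_k\tfrac{1}{\Delta^k}\Z^m=\Z^m$ and trivially $M\equiv\Theta\pmod 1$, hence by the equivalence just proved $\mathrm{Fin}_{\mathcal D}(M)=\Z^m$. (Equivalently, one invokes item (2) of \cref{list} which gives $\mathrm{Fin}_{\mathcal D}(M)\subseteq\Z^m$, while the hypothesis gives the reverse inclusion.) There is no serious obstacle in this proof; the only thing to notice is the uniformization step, where we replace the pointwise exponents $\ell_i$ by a single $\ell$ using the fact that $M\cdot\Delta\Z^m\subseteq\Delta\Z^m$.
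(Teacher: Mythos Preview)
Your proof is correct and follows essentially the same approach as the paper: reduce to \cref{C1}, rewrite $\tfrac{1}{\Delta}e_i\in M^{-\ell_i}\Z^m$ as the column congruence $M^{\ell_i}e_i\equiv 0\pmod\Delta$, and use the monotonicity $M(\Delta\Z^m)\subseteq\Delta\Z^m$ to pass from the individual exponents $\ell_i$ to a common $\ell=\max_i\ell_i$. Your treatment of the two implications and of the unimodular case is slightly more explicit than the paper's, but the argument is the same.
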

\begin{proof} By Claim \ref{C1} we have to study when $\frac{1}{\Delta}e_i \in M^{-\ell_i}(\Z^m)$ for some $\ell_i \in \N$.  It can be rewritten into  $M^{\ell_i} e_i \in \Delta \Z^m$, or equivalently, $ M^{\ell_i} e_i = 0\mod \Delta$.     We  denote the   $i^{th}$ column of a matrix $A$ by $A_{\bullet i}$.  Obviously,  $M^{\ell_i}e_i = (M^{\ell_i})_{\bullet i}$. 

 Let us realize that  if $(M^{\ell_i})_{\bullet i}= 0 \mod \Delta$,  then   $(M^{\ell_i})_{\bullet i}=0 \mod \Delta$ for all $\ell\in \N, \ell>\ell_i$. Indeed,  $(M^{\ell_i})_{\bullet i} = M^{\ell -\ell_i} (M^{\ell_i})_{\bullet i} =  M^{\ell -\ell_i} 0 \mod \Delta $.  Therefore,   we can look for a common exponent $\ell$ for all columns of the matrix $M$.  
\end{proof}

\bigskip
\begin{example} Let $M_1 = \left( \begin{array}{rr} 2&1\\2&2 \end{array} \right) $  and  $M_2 = \left( \begin{array}{rr} 1&2\\2&2 \end{array} \right) $. 
Then $\det M_1 = 2$ and $\det M_2 = -2$.  
$$M_1 = \left( \begin{array}{rr} 0&1\\0&0 \end{array} \right) \mod \Delta,   \qquad   M_1^2 = \Theta \mod \Delta \qquad \text{and thus \ } {\rm Fin}_{\mathcal{D}}(M) =  \bigcup\limits_{k\in \mathbb{N}} \frac{1}{2^k}\mathbb{Z}^{2}   $$  
for a suitable   $\mathcal{D} \subset \Z^2$. 

On the other hand,  $$M_2 = \left( \begin{array}{rr} 1&0\\0&0 \end{array} \right) \mod \Delta,   \qquad   \text{and thus \ } \ \  M_2^\ell  \neq  \Theta \mod \Delta \  \text{ for all $\ell \in \N$\ }  \,. $$  
In particular,  $\Bigr(\!\!\! \begin{array}{r} \tfrac12\\0 \end{array}\!\!\! \Bigl) \notin {\rm Fin}_{\mathcal{D}}(M_2)$ for any choice $\mathcal{D} \subset \Z^2$.  

\end{example}

\section{Comments}
The main idea of this paper comes from~\cite{VavraIsrael} concerning number systems with algebraic base without any restrictions on its Galois conjugates. We do not give an explicit bound on the size of the digit set, even though our proof is a constructive one. For matrices without eigenvalues on the unit circle, the method used in \cite{FrPeSv},\cite{FrHePeSv} can be used to obtain explicit digit sets, as well as arithmetic algorithms. 



To find a digit set $\mathcal{D} \subset \Z^m$ of the minimal cardinality may be a hard problem. It is known that if $M$ is expanding, then we need a digit set of size at least  $|\det M|$ to be able to represent integer vectors in the form $\sum_{k=0}^nM^kd_k$. In our case it seems that an appropriate lower bound would be around the product of absolute values of the eigenvalues outside the unit circle.

In any case, the digit set needs to have at least one digit beside zero. Two digits might be already enough for some matrices of dimension two or more, as is illustrated by the following examples.

The first example is a case of a matrix having only one eigenvalue outside the unit circle. It can be seen from the proof of Proposition~\ref{skoroHotovo} that this dominant eigenvalue will determine the size of the digit set. We will, however, utilize the connection between matrix numeration systems and those with real base.

\begin{example}   Let $T = \left(\!\!\! \begin{array}{rrr} 0&0&-1\\1&0&1\\0&1&-1 \end{array} \!\!\!\right) $. 
As $\det T =-1$, we have $  {\rm Fin}_{\mathcal{D}}(T)  = \Z^3$ for a suitable digit set guaranteed by Theorem \ref{main}. The characteristic polynomial of $T$ is  $x^3+x^2-x+1$. $T$ has one negative eigenvalue $\beta = -1,839\dots$ and a pair of complex conjugate eigenvalues in  modulus smaller than 1. As shown in \cite{KrStVa},  any element of  the ring  $\Z[\beta]=\{a+b\beta +c\beta^2: a,b,c \in \Z\}$    can be written as 
\begin{equation}\label{tribo} a+b\beta +c\beta^2 = \sum_{i=k}^n a_i\beta^i,  \quad \text{where }  k, n \in \Z, k\leq n  \text{ and \ }  a_i \in \{0,1\} \,.\end{equation}

We use the isomorphism $\psi :  \Z[\beta]  \to \Z^3$ of two additive groups  given by   $\psi(a+b\beta +c\beta^2) = \left(\!\!\!  \begin{array}{r} a\\b\\c \end{array}\!\!\! \right)$. 
Since $\beta  (a+b\beta +c\beta^2 ))=  -c +(a+c)\beta +(b-c)\beta^2$, we have  
$$\psi(\beta (a+b\beta +c\beta^2 ) )=\left(\!\!\!  \begin{array}{c} -c\\ a+c \\b-c \end{array}\!\!\! \right) = \left(\!\!\! \begin{array}{rrr} 0&0&-1\\1&0&1\\0&1&-1 \end{array} \!\!\!\right)\left(\!\!\!  \begin{array}{r} a\\b\\c \end{array}\!\!\! \right)  =T\left(\!\!\!  \begin{array}{r} a\\b\\c \end{array}\!\!\! \right).$$ 
Applying this rule to  Equation \eqref{tribo} we deduce that   any element of $ \Z^3$  can be expressed as  
$$\left(\!\!\!  \begin{array}{r} a\\b\\c \end{array}\!\!\! \right) =  \psi(a+b\beta +c\beta^2)=  \sum_{i\in I}\psi(\beta^i a_i) =  \sum_{i=k}^n T^i \psi( a_i) = \sum_{i=k}^n T^i\left(\!\!\!  \begin{array}{c} a_i\\0\\0 \end{array}\!\!\! \right)\,.$$
In other words, $  {\rm Fin}_{\mathcal{A}}(T)  = \Z^3$ already for the small digit set 
$
\mathcal{A} =  \{ (0,0,0)^T, (1,0,0)^T \}
$.

\end{example}

The second example is a rotation matrix in $\R^2.$ Notice that in this case the link to real/complex base numeration systems is missing, for there is no eigenvalue of modulus $>1.$ Moreover, the actual minimal digit set is significantly smaller that the one ensured by the the proof of Proposition~\ref{skoroHotovo}.

\begin{example}
Consider $M = \begin{pmatrix}0&1\\-1&0\end{pmatrix}$ with the digit set $\mathcal D=\{(0,0)^T,(1,0)^T\}.$ 

We have the identities
\[
M^{4k}=\begin{pmatrix}1&0\\0&1\end{pmatrix},\quad M^{4k+1} = \begin{pmatrix}0&1\\-1&0\end{pmatrix},\quad M^{4k+2}=\begin{pmatrix}-1&0\\0&-1\end{pmatrix},\quad M^{4k+3}=\begin{pmatrix}0&-1\\1&0\end{pmatrix}.
\]

Hence 
\[
M^{4k}\begin{pmatrix}1\\0\end{pmatrix} = \begin{pmatrix}1\\0\end{pmatrix},
\quad M^{4k+1}\begin{pmatrix}1\\0\end{pmatrix}=\begin{pmatrix}0\\-1\end{pmatrix},
\quad M^{4k+2}\begin{pmatrix}1\\0\end{pmatrix}=\begin{pmatrix}-1\\0\end{pmatrix},
\quad M^{4k+3}\begin{pmatrix}1\\0\end{pmatrix}\begin{pmatrix}0\\1\end{pmatrix}.
\]
It is now easy to see how the representations are created. For instance, if $a,b>0,$ then
\[
\begin{pmatrix}a\\b\end{pmatrix} = \sum_{k=1}^a M^{4k}\begin{pmatrix}1\\0\end{pmatrix} + \sum_{k=1}^b M^{4k+3}\begin{pmatrix}1\\0\end{pmatrix}.
\]
\end{example}

\section*{Acknowledgements}
E.P.  acknowledges support of the project CZ.02.1.01/0.0/0.0/16\_019/0000778. T.V. was supported by Charles University,  projects PRIMUS/20/SCI/002 and UNCE/SCI/022.

\end{document}